\newtheorem{example}{Example}[section]
\newcommand{\be} {\begin{eqnarray}}
\newcommand{\ee} {\end{eqnarray}}
\newcommand{\bep} {\begin{eqnarray*}}
\newcommand{\eep} {\end{eqnarray*}}
\newcommand {\Hol}{\mathop{\rm Hol}\nolimits}
\renewcommand {\Re}{\mathop{\rm Re}\nolimits}
\newcommand {\A}{\mathcal{A}}
\newcommand {\ad}{\mathop{\rm ad}\nolimits}
\newcommand {\Ff}{\mathcal{F}}
\newcommand{\R}{{\mathbb R}}
\newcommand{\N}{{\mathbb N}}
\newcommand{\B}{{\mathbb B}}
\newcommand{\C}{{\mathbb C}}
\newcommand {\D}{\mathbb{D}}
\newtheorem{remar}{Remark}[section]
\newtheorem{examp}{Example}[section]
\newtheorem{defin}{Definition}[section]
\newtheorem{corol}{Corollary}[section]
\newtheorem{propo}{Proposition}[section]
\newtheorem{theorem}{Theorem}[section]
\newtheorem{lemma}{Lemma}[section]
\newcommand{\rema}{\begin{remar}\rm}
\newcommand{\erema}{$\blacktriangleright$\end{remar}}
\newcommand{\exa}{\begin{examp}\rm}
\newcommand{\eexa}{$\blacktriangleright$\end{examp}}
\begin{document}

\title{Non-commutative holomorphic semicocycles}

\author[M. Elin]{Mark Elin}

\address{Department of Mathematics,
         Ort Braude College,
         Karmiel 21982,
         Israel}

\email{mark$\_$elin@braude.ac.il}

\author[F. Jacobzon]{Fiana Jacobzon}

\address{Department of Mathematics,
         Ort Braude College,
         Karmiel 21982,
         Israel}

\email{fiana@braude.ac.il}

\author[G. Katriel]{Guy Katriel}

\address{Department of Mathematics,
         Ort Braude College,
         Karmiel 21982,
         Israel}

\email{katriel@braude.ac.il}



\maketitle


\begin{abstract}
This paper studies holomorphic semicocycles over semigroups in the unit disk, which take values in
an arbitrary unital Banach algebra. We prove that every such
semicocycle is a solution to a corresponding evolution problem. We
then investigate the linearization problem: which semicocycles are
cohomologous to constant semicocycles? In contrast with the case
of commutative semicocycles, in the non-commutative case
non-linearizable semicocycles are shown to exist. Simple
conditions for linearizability are derived and are shown to be sharp.

{\it{Key words: holomorphic semicocycle, nonlinear semigroup, evolution problem, linearization.}}

\end{abstract}

\section{Introduction}
\setcounter{equation}{0}

The notion of (semi-)cocycle over a group or semigroup of
transformations is important in many areas of the theory of
dynamical systems (see, {\it{e.g.}}, \cite{K-H}), in particular in connection with the study of
non-autonomous differential equations (see, {\it{e.g.}},
\cite{Latus}). These objects, in addition to their intrinsic
interest, have applications to the study of invariant subspaces of
Banach spaces of holomorphic mappings and semigroups of weighted
composition operators on these spaces; see, {\it{e.g.}},
\cite{Latus,Konig, J-12}.

Several works have studies holomorphic semicocycles over semigroups of holomorphic self-mappings of the
open unit disk $\D$ of the complex plane $\C$, with
values in $\C$, which are {\it commutative} semicocycles. In
particular, it was shown in \cite{J-05} (see also \cite{Konig})
that each $\C$-valued semicocycle is smooth; the question whether
a semicocycle is a coboundary was considered in \cite{Konig,
J-97}.

Our aim in this work is to study {\it non-commutative}
semicocycles over semigroups of holomorphic self-mappings of $\D$,
a primary example of such semicocycles being those with values in
$M_n(\C)$, the space of $n\times n$ matrices. We wish, on the one
hand, to extend some results known for ($\C$-valued) commutative
semicocycles to the non-commutative case --- as we will see,
while some such extensions are straightforward, others require
some new ideas. On the other hand, we wish to stress some
essential differences between the commutative and the
non-commutative cases. We will see that interesting new phenomena
arise in the non-commutative case, even when we consider semicocycles over semigroups of the
{\it simplest} kind, that consist of linear functions.

\medskip

To introduce our basic notions, we start with some standard
notations. Let $\mathcal{D}$ be a domain in the complex plane $\C$
and let $\Omega$ be a domain in a complex Banach space. By
$\Hol(\mathcal{D},\Omega)$ we denote the set of all holomorphic
mappings on $\mathcal{D}$ with values in $\Omega$ and
$\Hol(\mathcal{D}):=\Hol(\mathcal{D,D})$ is the set of all
holomorphic self-mappings of $\mathcal{D}$. The open unit disk in
$\C$ is denoted by $\D$ and $\D_r=r\D,$ the open disk of radius
$r$. We also denote by $\A$ a unital Banach algebra over $\C$
equipped with the norm $\|\cdot\|$ such that the unit element
$1_\A$ satisfies $\|1_\A\|=1$. The set of all invertible elements
of $\A$ is denoted by $\A_*$. An important example is the algebra
of bounded linear operators on a Banach space; in particular, the
case of a finite-dimensional space corresponds to $\A=M_n(\C)$.

\begin{defin}\label{def-sg-hol}
    A family $\mathcal{F}
    =\left\{F_t\right\}_{t\ge0}\subset\Hol(\mathcal{D})$ is called a
    one-parameter continuous semigroup (semigroup, for short) on
    $\mathcal{D}$ if it is continuous in $(t,z)\in[0,\infty)\times\mathcal{D}$ and the following properties hold

    (i) $F_{t+s}=F_t \circ F_s$ for all $t,s\geq 0$;

    (ii) $F_0(z) =z$ for all $z\in \mathcal{D}$.
\end{defin}

It was proven in \cite{B-P} that {\it every one-parameter
continuous semigroup $\mathcal{F}=\left\{F_t\right\}_{t\ge0}$ on
$\D$ is differentiable with respect to its parameter $t\ge0$.} In
this case the limit
\begin{equation}\label{gener}
f(z) = \lim_{t\to 0^{+}} \frac{F_t(z) -z}t,\quad z\in\D,
\end{equation}
exists. The mapping $f\in \Hol(\D,\C)$ is called the {\sl
(infinitesimal) generator} of the semigroup~$\mathcal{F}.$ In
turn, the generated semigroup $\mathcal{F}$ can be reproduced as
the solution of the Cauchy problem
\begin{equation}  \label{nS1}
\left\{
\begin{array}{l}
\displaystyle
\frac{\partial u(t,z)}{\partial t}=f(u(t,z)) \vspace{2mm} \\
u(0,z)=z,%
\end{array}%
\right.
\end{equation}%
where we set $u(t,z)=F_{t}(z)$.

Another important way to represent and to study semigroups is by
means of a linearization model of the form
$F_t(z)=h^{-1}\left(G_t\circ h(z) \right)$, where
$h\in\Hol(\mathcal{D},\C)$ is a biholomorphic mapping, and
$\{G_t\}_{t\ge0}$ is an affine semigroup (see \cite{E-S-book} for
a survey of this problem). In particular, it is known that if for
some point $z_0\in\overline{\D}$, the semigroup generator
$f\in\Hol(\D,\C)$ satisfies $f(z_0)=0$ and $\Re f'(z_0)<0$, then
there is a spirallike function $h\in\Hol(\mathcal{D},\C)$, called
the K{\oe}nigs function, such that
    \begin{equation}\label{konigs}
F_t(z)=h^{-1}\left(e^{tf'(z_0)}h(z)\right).
    \end{equation}

We now introduce the central object to  be studied in this paper.

\begin{defin}\label{semicocycle}
    Let $\mathcal{F}=\{F_t\}_{t\ge0}\subset\Hol(\D)$ be a
    semigroup on the open unit disk. Let $\Gamma_t:\R^+\to \Hol(\D,
    \A)$ be such that the mapping $(t,z)\mapsto \Gamma_t(z) \in\A$ is
    continuous. The family $\left\{\Gamma_t\right\}_{t\ge0}$ is called
    a semicocycle over $\mathcal{F}$ if it satisfies the following
    properties:

    \begin{itemize}
        \item [(a)] the chain rule:
        $\Gamma_t(F_s(z))\Gamma_s(z) = \Gamma_{t+s}(z)$ for all $t,s\ge0$
        and $z\in\D$;

        \item [(b)] $\Gamma_0(z)=1_{\A}$ for every $z \in \D$.
    \end{itemize}
\end{defin}

Since both semigroup elements and semicocycle elements depend on
the real parameter $t\ge0$ and the point $z\in\D$, we usually
denote the derivative with respect to the parameter $t$ by
$\frac{\partial} {\partial t}$ while the derivative with respect
to $z$ is denoted by ${F_t}'(z),\ {\Gamma_t}'(z)$ and so on.

For $\A=\C$, the most natural example of a semicocycle over $\Ff$
is $\Gamma_t(z)={F_t}'(z)$. Another simple example is a
{\it{constant}} (not dependent on $z$) semicocycles of the form
$\Gamma_t(z)=e^{t B_0} $, where $B_0\in\A$.

In this work we study {\it inter alia} two general methods to
construct semicocycles:
\begin{itemize}
    \item[(i)] If $B:\D\rightarrow \A$ is a holomorphic mapping, a semicocycle $\Gamma_t(z)$ can be generated
    by solving the evolution problem
\begin{equation}\label{dd1}
\frac{d}{dt}\Gamma_t(z)=B(F_t(z))\Gamma_t(z),\;\;\;
\Gamma_0(t)=1_{\A}
\end{equation}
    (see Theorem \ref{th-sol-cauchy}(a) below), which can be considered a semicocycle analog of the Cauchy problem~\eqref{nS1} for
    semigroups.  Thus the mapping $B$ serves as a `generator' of a semicocycle.

    \item[(ii)] If $M: \D \rightarrow \A_*$ is holomorphic and $B_0\in\A$, then
\begin{equation}\label{rr0}
\Gamma_t(z)=M(F_t(z))^{-1}e^{tB_0}M(z)
\end{equation}
    is a semicocycle (see Lemma \ref{propo-gener} below).
    In view of the analogy between~\eqref{rr0} and \eqref{nS1}, representing a given semicocycle $\{\Gamma_t\}_{t\ge0}$ by~\eqref{rr0} will be called
    `linearization' of  $\{\Gamma_t\}_{t\ge0}$.  Note that the choice $B_0=0$ leads to semicocycles of
     the form $\Gamma_t(z)=M(F_t(z))M(z)^{-1}$, which are sometimes referred
    to as coboundaries; see, for example~\cite{J-97}.
\end{itemize}

These two constructions are of central importance: they
immediately suggest questions concerning their degree of
generality:

\begin{itemize}
    \item[(Q1)] Is every semicocycle differentiable? In the case of affirmative answer,
    is it generated, that is, can it be reproduced solving the Cauchy problem \eqref{dd1} for some $B\in\Hol(\D,\A)$?

    \item[(Q2)] Is every semicocycle linearizable, that is, can be  represented in the form \eqref{rr0}?
\end{itemize}

The last question is intimately connected to the problem of
classifying semicocycles up to the equivalence relation of
{\it{cohomology}}: a semicocycle $\Gamma_t$ is said to be
cohomologous to a semicocycle $\tilde{\Gamma}_t$ if there exists a
holomorphic mapping $M:\D\rightarrow \A_*$ (called `transfer
mapping') such that $\Gamma_t(z)=M(F_t(z))^{-1}\tilde{\Gamma}_t(z)
M(z)$ (see, {\it{e.g.}}, \cite{K-H}). Hence (Q2) can be
reformulated: is every semicocycle cohomologous to a constant
semicocycle?

These problems are the central focus of this work and are studied
in Sections~\ref{sect-diff} and \ref{sect-linea}.

For question (Q1), we will give a positive answer in
Section~\ref{sect-diff}, proving that every semicocycle is
generated by an appropriate holomorphic mapping $B$ (see Theorem
\ref{th-sol-cauchy} below). The main step here is to prove that a
semicocycle is automatically differentiable with respect to its
parameter~$t$. For the case $\A=\C$, this was proved in
\cite{J-05}, but the proof given there does not seem easy to
generalize to the non-commutative case. We therefore develop an
alternative proof, using an approach which appears to us simpler,
even in the case $\A=\C$. In this section, we also apply Theorem
\ref{th-sol-cauchy} to derive growth estimates for semicocycles.

When investigating question (Q2), we discover real differences
between the commutative and non-commutative cases. For $\A$
commutative, we show that the answer is positive, and an explicit
expression for the mapping $M$ is constructed in terms of the
`generator' of the semicocycle. The proof of this result, which
slightly generalizes the results of \cite{J-05} for the case
$\A=\C$, relies very strongly on commutativity. In fact, as soon
as we move to the non-commutative case, a simple example with
$\A=M_2(\C)$ shows that there are semicocycles which are not
linearizable (see Example~\ref{ex-not-rep}).

To understand the obstructions to linearization, we develop an
approach based on power series expansions (with coefficients in
$\A$), around the interior fixed point $z_0$ of the semigroup
$\mathcal{F}$. This allows us to show that the key factor is the
spectral behavior of the element $B_0=B(z_0)$, where $B$ is the
`generator' of $\Gamma_t$. We provide a simple and sharp condition
on the spectrum of $B_0$ under which the semicocycle is
linearizable. This condition, in the case $\A=M_n(\C)$, holds for
all matrices $B_0$ except for a `thin' set. Thus, while the
answer to (Q2) is negative as stated, it is positive in the
generic sense. Our results thus provide a quite precise answers to
the questions posed.

\bigskip

\section{Generation of semicocycles}\label{sect-diff}
\setcounter{equation}{0}

In this section we prove that every holomorphic semicocycle is
differentiable with respect to $t$, and can be generated as the
unique solution of an evolution problem the form \eqref{dd1}.

We will need some auxilliary results on general evolution problems in a Banach algebra $\A$.
Given $a:\R^{+}\to \A$, consider the evolution problem
\begin{equation}\label{cauchy_1}
\left\{
\begin{array}{l}
\displaystyle \frac{d v(t)}{d t} =a(t)v(t) \vspace{2mm} \\
v(0)=1_{\A},
\end{array}%
\right.
\end{equation}
where $v\in \R^+\to \A$. The general theory of such problems is well-known (see, for example, \cite{Kr, Pa} and references
therein), and the following theorem collects some basic facts.

\begin{theorem}\label{th-sol-cauchy_1}
Let $a(t)\in C(\R^{+}, \A)$. The following assertions hold.
\begin{itemize}
\item[(i)] The evolution problem~\eqref{cauchy_1} is equivalent to
the integral equation
\begin{equation}\label{integral-form}
v(t) =  1_\A + \int_0^t a(s)v(s) ds.
\end{equation}

\item[(ii)] The evolution problem~\eqref{cauchy_1} (hence, the
integral equation~\eqref{integral-form}) has a unique solution
$u\in C((0,\infty),\A)$. Moreover, defining for every element
$A\in \A$ the quantity
\begin{equation}\label{mu}\mu(A)=\lim_{t\to 0^{+}}\frac{1}{t}[\|1_{\A}+tA\|-1],\end{equation}
we have
\begin{equation}\label{estim-u}
\|u(t)\|_U\le \exp \left(\int_0^t \mu(a(s))ds \right).
\end{equation}

\item[(iii)] The element $u(t)$ is invertible for every $t\ge0$.
\end{itemize}
\end{theorem}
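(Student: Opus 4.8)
The plan is to prove the three assertions in the natural order, using the standard Picard-type iteration argument adapted to the Banach-algebra setting.

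For part (i), I would argue that the equivalence is essentially a matter of integrating the differential equation and differentiating the integral equation. If $v$ solves~\eqref{cauchy_1}, then integrating both sides from $0$ to $t$ and using $v(0)=1_\A$ immediately yields~\eqref{integral-form}. Conversely, if $v\in C(\R^+,\A)$ satisfies~\eqref{integral-form}, then the integrand $s\mapsto a(s)v(s)$ is continuous, so the right-hand side is continuously differentiable by the fundamental theorem of calculus (valid for Banach-space-valued integrals), and differentiating recovers~\eqref{cauchy_1}; evaluating at $t=0$ gives $v(0)=1_\A$. This part is routine.

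For the existence and uniqueness in part (ii), I would set up the Picard iteration $v_0(t)=1_\A$ and $v_{n+1}(t)=1_\A+\int_0^t a(s)v_n(s)\,ds$. On any compact interval $[0,T]$, the continuity of $a$ gives a bound $\|a(s)\|\le K$, and a standard induction shows $\|v_{n+1}(t)-v_n(t)\|\le \frac{(Kt)^n}{n!}\cdot C$ for a suitable constant, so the sequence converges uniformly on $[0,T]$ to a continuous solution of~\eqref{integral-form}. Uniqueness follows from a Gr\"onwall-type estimate: if $v,\tilde v$ are two solutions, then $\|v(t)-\tilde v(t)\|\le K\int_0^t \|v(s)-\tilde v(s)\|\,ds$ forces the difference to vanish. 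The submultiplicativity of the algebra norm and the normalization $\|1_\A\|=1$ are exactly what make these estimates go through in $\A$ just as in the scalar case.

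The genuinely delicate part of (ii) is the sharp growth estimate~\eqref{estim-u} involving the logarithmic norm $\mu(a(s))$ defined in~\eqref{mu}; this is the main obstacle. Here I would work directly with the solution rather than the iterates. The idea is to estimate the growth of $\phi(t):=\|u(t)\|$ by comparing $u(t+h)$ with $(1_\A+h\,a(t))u(t)$ for small $h>0$. Using~\eqref{integral-form} one shows $u(t+h)=(1_\A+h\,a(t))u(t)+o(h)$, so $\phi(t+h)\le \|1_\A+h\,a(t)\|\,\phi(t)+o(h)$. Dividing by $\phi(t)$ (handling the possibility that $\phi$ vanishes separately, though part~(iii) will show it does not) and taking the upper right Dini derivative, one obtains $D^+\log\phi(t)\le \mu(a(t))$, where the limit defining $\mu$ in~\eqref{mu} is precisely what converts $\|1_\A+h\,a(t)\|$ into the infinitesimal bound. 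Integrating this differential inequality from $0$ to $t$, with $\phi(0)=\|1_\A\|=1$, yields~\eqref{estim-u}. Care is needed to justify that $\mu(a(\cdot))$ is integrable and that the Dini-derivative integration is legitimate, but the continuity of $a$ and of $t\mapsto \|1_\A+t a\|$ suffices.

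For part (iii), the invertibility of $u(t)$, I would construct the inverse explicitly as the solution of an adjoint evolution problem. Consider the problem $\frac{dw}{dt}=-w(t)a(t)$ with $w(0)=1_\A$; by the same Picard argument (applied with multiplication on the opposite side) this has a unique continuous solution. Then I compute $\frac{d}{dt}\bigl(w(t)u(t)\bigr)=-w(t)a(t)u(t)+w(t)a(t)u(t)=0$, so $w(t)u(t)\equiv w(0)u(0)=1_\A$ for all $t$; a symmetric computation gives $u(t)w(t)\equiv 1_\A$. Hence $u(t)$ is invertible with $u(t)^{-1}=w(t)$ for every $t\ge 0$. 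This two-sided cancellation is where non-commutativity must be respected, which is why the inverse is built from the opposite-sided equation rather than simply from a series.
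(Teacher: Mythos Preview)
Your proposal is mathematically sound. The Picard iteration for existence and uniqueness, the Dini-derivative argument for the logarithmic-norm estimate~\eqref{estim-u}, and the construction of the inverse via the opposite-sided equation $w'(t)=-w(t)a(t)$ are all correct and are indeed the standard arguments.

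However, you should be aware that the paper does not actually prove this theorem. Immediately after the statement it simply says that assertion~(i) and the uniqueness part of~(ii) ``can be found in different monographs (see, for example, \cite{Pa, Kr}), while estimate~\eqref{estim-u} was proven in \cite{AG1964}.'' In other words, Theorem~\ref{th-sol-cauchy_1} is treated as a collection of known background facts, cited rather than demonstrated. Your outline therefore goes well beyond what the paper does; it supplies the content that the paper delegates to the references. The Dini-derivative argument you sketch for~\eqref{estim-u} is essentially the one in the cited Almkvist paper, so in that sense your approach is consistent with the source the authors invoke.
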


In particular, assertion (i) as well the uniqueness part of
assertion (ii) can be found in different monographs (see, for
example, \cite{Pa, Kr}), while estimate \eqref{estim-u} was proven
in \cite{AG1964}.

Our specific interest is to the special case of \eqref{cauchy_1}
in which the function $a$ has a `generator' structure. More
precisely, we assume that $a(t)=B(F_t(z))$, where
$\mathcal{F}=\{F_t\}_{t\ge0} \subset\Hol(\D)$ is a one-parameter
continuous semigroup and $B\in\Hol(\D,\A)$. In this case it is
natural to replace an $\A$-valued function $v(t)$ above by a
mapping ${ v(t,z) : \R^+ \times \D\to\A}$.
By~Theorem~\ref{th-sol-cauchy_1}, the corresponding evolution
problem has the unique solution $u(t):= u(t,z)$ for every fixed
$z\in\D$. It turns out that the solutions of these problems are
precisely all the semicocycles over the semigroup $\mathcal{F}$.

\begin{theorem}\label{th-sol-cauchy}
Let $\mathcal{F}=\left\{F_t\right\}_{t\ge0}\subset\Hol(\D)$ be a
semigroup on the open unit disk $\D$.

(a) Assume that $B\in\Hol(\D,\A)$. Denote by $u(t,z),\ t\ge0,\
z\in\D,$ the unique solution to the evolution problem
\begin{equation}\label{cauchy}
\left\{
\begin{array}{l}
\displaystyle \frac{d u(t,z)}{d t} =B(F_t(z))u(t,z) \vspace{2mm} \\
u(0,z)=1_{\A}.
\end{array}%
\right.
\end{equation}
Then the family $\{\Gamma_t(\cdot):=u(t,\cdot)\}_{t\ge0}$ is a
semicocycle over $\Ff$.

(b) Let $\left\{\Gamma_t\right\}_{t\ge0}$ be a semicocycle over
$\mathcal{F}$. Then $\Gamma_t(z)$ is differentiable with respect
to $t$ for every $z \in \D$. Defining
\begin{equation}\label{B}
B(z)=\frac{d}{dt}\Gamma_t(z)\Big|_{t=0},
\end{equation}
we have ${B\in\Hol(\D,\A)}$ and $\Gamma_t(z)$ is the solution to
the evolution problem~\eqref{cauchy}.
\end{theorem}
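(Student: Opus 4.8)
The plan is to treat the two parts separately, leaning on Theorem~\ref{th-sol-cauchy_1} for the existence, uniqueness, and invertibility of solutions of the abstract evolution problem. For part~(a) the initial condition $\Gamma_0(z)=1_\A$ is built into~\eqref{cauchy}, so the only real content is the chain rule. Here I would fix $s\ge0$ and $z\in\D$ and set $w(t):=u(t+s,z)\,u(s,z)^{-1}$, which is legitimate since $u(s,z)\in\A_*$ by Theorem~\ref{th-sol-cauchy_1}(iii). Differentiating and invoking the semigroup law $F_{t+s}=F_t\circ F_s$ gives $\frac{d}{dt}w(t)=B(F_t(F_s(z)))\,w(t)$ with $w(0)=1_\A$; that is, $w$ solves~\eqref{cauchy} with $z$ replaced by $F_s(z)$. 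The uniqueness in Theorem~\ref{th-sol-cauchy_1}(ii) then forces $w(t)=u(t,F_s(z))$, which is exactly $\Gamma_{t+s}(z)=\Gamma_t(F_s(z))\Gamma_s(z)$. It remains to verify the regularity required by Definition~\ref{semicocycle}: running the Picard iteration for~\eqref{cauchy}, each iterate is holomorphic in $z$ (since $B\circ F_s$ is holomorphic and integration preserves holomorphy) and jointly continuous in $(t,z)$ (using the joint continuity of $(s,z)\mapsto F_s(z)$); as the iterates converge uniformly on compacta, the vector-valued Weierstrass theorem yields $\Gamma_t\in\Hol(\D,\A)$ together with joint continuity.

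Part~(b) is the heart of the matter, and I expect the main obstacle to be extracting $t$-differentiability of $\Gamma_t(z)$ from the bare hypotheses of continuity and the chain rule. My approach is to gain regularity by averaging in $t$. Writing the chain rule as $\Gamma_{s+t}(z)=\Gamma_s(F_t(z))\Gamma_t(z)$ and integrating over $s\in[0,\delta]$, I obtain
\begin{equation*}
\int_t^{t+\delta}\Gamma_\sigma(z)\,d\sigma=V(F_t(z))\,\Gamma_t(z),\qquad V(w):=\int_0^\delta\Gamma_s(w)\,ds .
\end{equation*}
Because $\Gamma_0\equiv 1_\A$, continuity gives $\bigl\|\tfrac1\delta V(w)-1_\A\bigr\|<1$ uniformly for $w$ in any prescribed compact subset of $\D$ once $\delta$ is small; since $\{F_t(z):t\in[0,T]\}$ is compact in $\D$, the element $V(F_t(z))$ is invertible for all $t\in[0,T]$, whence
\begin{equation*}
\Gamma_t(z)=V(F_t(z))^{-1}\int_t^{t+\delta}\Gamma_\sigma(z)\,d\sigma .
\end{equation*}
The right-hand side is manifestly differentiable in $t$: the integral is differentiable by the fundamental theorem of calculus, while $V(F_t(z))^{-1}$ is differentiable because $V$ is holomorphic, $t\mapsto F_t(z)$ is differentiable by~\cite{B-P}, and inversion is analytic on $\A_*$. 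This settles differentiability.

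Once $t$-differentiability is secured, the evolution equation falls out by differentiating the chain rule $\Gamma_{s+t}(z)=\Gamma_s(F_t(z))\Gamma_t(z)$ with respect to $s$ at $s=0$: the left-hand side produces $\frac{d}{dt}\Gamma_t(z)$, and the right-hand side produces $B(F_t(z))\Gamma_t(z)$ with $B(w)=\frac{d}{ds}\Gamma_s(w)\big|_{s=0}$ as in~\eqref{B}. To confirm $B\in\Hol(\D,\A)$, I would differentiate the displayed integral formula at $t=0$; using $\int_0^\delta\Gamma_\sigma(z)\,d\sigma=V(z)$ and $\frac{d}{dt}F_t(z)\big|_{t=0}=f(z)$, the terms collapse to
\begin{equation*}
B(z)=V(z)^{-1}\bigl[\Gamma_\delta(z)-1_\A-V'(z)f(z)\bigr],
\end{equation*}
which is holomorphic in $z$ because $V$, $V'$, and $\Gamma_\delta$ are holomorphic, $f$ is the (holomorphic) semigroup generator, and inversion preserves holomorphy.

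Finally, $\Gamma_t(z)$ satisfies~\eqref{cauchy} with this $B$ and initial value $1_\A$, so it is \emph{the} solution by the uniqueness in Theorem~\ref{th-sol-cauchy_1}(ii), completing the proof. The one genuinely delicate point throughout is the uniform-in-$t$ invertibility of $V(F_t(z))$ on compact parameter intervals, since it is precisely what licenses both the differentiability argument and the closed form for $B$; everything else reduces to the chain rule, standard ODE uniqueness, and the stability of holomorphy under integration and inversion.
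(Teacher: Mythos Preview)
Your proposal is correct and follows essentially the same route as the paper: for part~(a) both arguments reduce the chain rule to uniqueness for the linear ODE (you compare $u(t+s,z)u(s,z)^{-1}$ with $u(t,F_s(z))$, while the paper compares their difference with zero), and for part~(b) both average $\Gamma_s$ over a short interval to obtain the identity $V(F_t(z))\Gamma_t(z)=\int_t^{t+\delta}\Gamma_\sigma(z)\,d\sigma$, invert $V$ near $1_\A$, and differentiate to recover both $t$-differentiability and the explicit formula $B(z)=V(z)^{-1}[\Gamma_\delta(z)-1_\A-V'(z)f(z)]$. The only cosmetic differences are your use of trajectory compacta $\{F_t(z):t\in[0,T]\}$ in place of the paper's disks $\overline{\D}_r$, and your appeal to the invertibility of $u(s,z)$ in part~(a), which the paper avoids by working with the difference directly.
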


\begin{proof}
(a) Assume $u(t,z)$ satisfies \eqref{cauchy}. We first note that $u(t,\cdot)\in\Hol(\D,\A)$ by the theorem
on the differentiability of solutions of differential equations
with respect to parameters.

Fix $z\in \D$, $s\geq 0$ and define
\begin{equation*}\label{ch-ru-u1}
h(t)=u(t+s,z)-u\left(t, F_s (z)\right)u(s,z).
\end{equation*}
We have $h(0)=u(s,z)-u\left(0, F_s (z)\right)u(s,z)=0.$ Using
\eqref{cauchy} and the semigroup property of $\Ff$, we compute
\begin{eqnarray*}
 h^\prime(t)&=&\frac{\partial}{\partial t} u(t+s,z)-
\frac{\partial}{\partial t} u\left(t, F_s(z)\right)u(s,z)  \\
&=& B(F_{t+s}(z))u(t+s,z)-B(F_t(F_s(z))u(t,F_s (z))u(s,z)  \\
&=& B(F_{t+s}(z))u(t+s,z)-B(F_{t+s}(z))u(t,F_s (z))u(s,z)  \\
&=& B(F_{t+s}(z))\left(u(t+s,z)-u(t,F_s (z))u(s,z)\right)  \\
&=&B(F_{t+s}(z))h(t).
\end{eqnarray*}

Thus, the function $h(t)$ satisfies the linear differential
equation
\[
h^\prime(t)=B(F_{t+s}(z))h(t).
\]

Since $h(0) = 0$, the uniqueness theorem for linear differential
equations implies that
\begin{equation*}\label{cauchy_3}
h(t)=0\qquad \mbox{ for all }\ t\geq 0.
\end{equation*}
This proves that $\Gamma_t(z)=u(t,z)$ satisfies the chain rule. In
addition, ${\Gamma_0(z)=u(0,z)=1_\A}$, hence the family
$\{\Gamma_t\}_{t\ge0}$  is a semicocycle.


(b) Assume that $\left\{\Gamma_t\right\}_{t\ge0}$ is a
semicocycle. We fix $0<r<1$ and first prove that
$t\mapsto\Gamma_t(z)$ is differentiable for any $z\in \D_{r}$.

We define
\begin{equation}
\label{V}V(t,z)=\int_0^{t} \Gamma_s(z)ds.
\end{equation}
Note that:

(i) By the continuity of the function $s\mapsto\Gamma_s(z)$, the
mapping $V$ is differentiable with respect to $t$, namely,
$ \frac{\partial }{\partial t}V(t,z)=\Gamma_t(z).$

(ii) ${\Gamma_t}'(z)$, the derivative of $\Gamma_{t}(z)$ with
respect to $z$, is jointly continuous with respect to $(t,z)$, so
that differentiation of the expression (\ref{V})  with respect to
$z$ under the integral sign is valid, hence
 $V(t,z)$ is holomorphic with respect to $z$.

(iii) We have that $\frac{1}{t}V(t,z)\to 1_{\A}$ as $t\to 0$,
uniformly with respect to $z$ in compact sets of $\D$.
Therefore we can choose $t_0>0$ so that $\frac{1}{t_0}V(t_0,z)$,
hence also $V(t_0,z)$, is invertible for all $z\in
\overline{\D}_{r}$.

Using the semicocycle property
$\Gamma_{t+s}(z)=\Gamma_s(F_t(z))\Gamma_t(z)$, we get the identity
\begin{eqnarray}\label{id}
&&V(t_0,F_t(z))\Gamma_t(z)=\int_0^{t_0} \Gamma_s(F_t(z))
\Gamma_t(z)ds  \nonumber \\
&=&\int_0^{t_0} \Gamma_{t+s}(z)ds =\int_t^{t+t_0} \Gamma_{s}(z)ds
=V(t+t_0,z)-V(t,z).
\end{eqnarray}

Now, fixing $z\in\D_{r}$, we can choose $t_1>0$ so that $F_t(z)\in
\D_{r}$ whenever $0\leq t\leq t_1.$ Therefore by (iii) above,
$V(t_0,F_t(z))$ is invertible for $0\leq t \leq t_1$. Thus we can
rewrite \eqref{id} as
$$
\Gamma_t(z)=\left[V(t_0,F_t(z))\right]^{-1}\left[V(t+t_0,z)-V(t,z)\right].
$$
By (i) and (ii) above, both terms in the product on the right-hand
side are differentiable with respect to $t$. This implies that
$t\mapsto \Gamma_t(z)$ is differentiable on $[0,t_1]$ and, in
particular, at $t=0$. In fact, we can obtain an explicit equation
for the derivative $B(z)=\frac{d}{dt}\Gamma_t(z)\Big|_{t=0}$.
Differentiating both sides of (\ref{id}) and denoting by $f$ the
infinitesimal generator of $\Ff$, we obtain
\[
V_z(t_0,F_t(z))f(F_t(z))\Gamma_t(z)+
V(t_0,F_t(z))\frac{d}{dt}\Gamma_t(z)=\Gamma_{t+t_0}(z)-\Gamma_t(z),
\]
and then setting $t=0$ and rearranging we have
\[
B(z)=V(t_0,z)^{-1}[\Gamma_{t_0}(z)-1_{\A}- f(z)V_z(t_0,z)].
\]
In particular, this expression shows that the function $B$ is
holomorphic in $\D_{r}$. Since the choice of $0<r<1$ was
arbitrary, we have shown that $B$ is a well-defined holomorphic
function on $\D$. Moreover, we have, for any $t$,
\begin{eqnarray}\label{lim-2}
&&\frac{\partial}{\partial t} \Gamma_t(z) = \lim_{s\to0^+}\frac1s
\left(\Gamma_{t+s}(z)-
\Gamma_t(z)\right) \nonumber \\
&=& \lim_{s\to0^+}\frac1s \left(\Gamma_s(F_t(z))-
1_{\A}\right)\Gamma_t(z) =B(F_t(z))\Gamma_t(z).
\end{eqnarray}
Therefore, for every $z\in\D$, the $\A$-valued mapping $v(t)=
\Gamma_t(z)$ is differentiable with respect to $t$, and is the
unique solution of \eqref{cauchy}. Note, finally, that by setting
$t=0$ is \eqref{lim-2} we obtain the representation \eqref{B} of
the generator $B$.
\end{proof}

In the remainder of this section, we present a few results about
holomorphic semicocycles, which can be derived using the above
theorem.

Theorems \ref{th-sol-cauchy_1}(iii) and \ref{th-sol-cauchy}
immediately imply the following fact.
\begin{corol}\label{cor-inver}
Let $\left\{\Gamma_t\right\}_{t\ge0}\subset\Hol(\D,\A)$ be a
semicocycle over some semigroup $\mathcal{F}\subset\Hol(\D)$. Then
$\Gamma_t(z)$ is invertible  for any $t\ge0$ and $z\in\D$.
\end{corol}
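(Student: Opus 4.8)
The plan is to combine the two preceding theorems directly, with essentially no additional work: the key observation is that Theorem~\ref{th-sol-cauchy}(b) identifies any semicocycle, for each fixed $z\in\D$, as the unique solution of the evolution problem~\eqref{cauchy}, while Theorem~\ref{th-sol-cauchy_1}(iii) guarantees that solutions of such problems are invertible. So the corollary should follow by simply checking that the hypotheses of the latter apply.

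First I would fix an arbitrary $z\in\D$ and invoke Theorem~\ref{th-sol-cauchy}(b): the map $t\mapsto\Gamma_t(z)$ is differentiable, the generator $B(z)=\frac{d}{dt}\Gamma_t(z)\big|_{t=0}$ lies in $\Hol(\D,\A)$, and $v(t):=\Gamma_t(z)$ is the unique solution of~\eqref{cauchy}. Next I would set $a(t):=B(F_t(z))$ and verify the hypothesis $a\in C(\R^{+},\A)$ of Theorem~\ref{th-sol-cauchy_1}. This is the one point requiring an (essentially trivial) check: $B$ is continuous on $\D$ because it is holomorphic, and $t\mapsto F_t(z)$ is continuous by the definition of a one-parameter continuous semigroup, so the composition $a(t)=B(F_t(z))$ is continuous. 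Hence $v(t)=\Gamma_t(z)$ is precisely the solution of the evolution problem~\eqref{cauchy_1} associated with this continuous $a$.

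Finally, Theorem~\ref{th-sol-cauchy_1}(iii) asserts that this solution is invertible for every $t\ge0$, i.e.\ $\Gamma_t(z)\in\A_*$. Since $z\in\D$ was arbitrary, this would establish invertibility of $\Gamma_t(z)$ for all $t\ge0$ and $z\in\D$. I do not expect any genuine obstacle here: the entire content of the corollary is already packaged into the two theorems, and the only verification needed is the continuity of $a$, which is immediate from the joint continuity assumptions built into the definitions of a semigroup and a semicocycle.
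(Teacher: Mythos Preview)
Your proposal is correct and follows exactly the paper's own approach: the paper simply states that the corollary is an immediate consequence of Theorems~\ref{th-sol-cauchy_1}(iii) and~\ref{th-sol-cauchy}. Your only addition is spelling out the (immediate) continuity of $a(t)=B(F_t(z))$, which is implicit in the paper's one-line justification.
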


The consequence of
Theorem~\ref{th-sol-cauchy}, which seems to be new even in the
one-dimensional case $\A=\C$, shows that the function $z\rightarrow \Gamma_t(z)$, for $t$ fixed,
satisfies a differential equation.
One can compare the formula below with a
known one for semigroup elements:
\[
{F_t}'(z)=\frac{f(F_t(z))}{f(z)}\,.
\]

\begin{corol}\label{cor-backw}
Let $\mathcal{F}=\left\{F_t\right\}_{t\ge0}\subset\Hol(\D)$ be a
semigroup generated by $f\in\Hol(\D,\C)$ and $\left\{\Gamma_t ,\
t\ge0\right\}$ be a semicocycle over $\mathcal{F}$. Denote
$B(z)=\left.\frac{\partial}{\partial t}\Gamma_t(z)\right|_{t=0}$.
Then
\[
{\Gamma_t}'(z)=\frac1{f(z)}\left[B(F_t(z))\Gamma_t(z)
-\Gamma_t(z)B(z)\right].
\]
\end{corol}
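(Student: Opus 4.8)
The plan is to obtain the identity by differentiating the chain rule (property (a) of Definition~\ref{semicocycle}) with respect to the second time-parameter and evaluating at zero. Writing the chain rule in the form
\[
\Gamma_{t+s}(z)=\Gamma_t(F_s(z))\,\Gamma_s(z),
\]
I would regard both sides as $\A$-valued functions of $s\ge0$ (with $t$ and $z$ fixed) and differentiate at $s=0$. All the ingredients required for the right-hand side are already available: $\Gamma_t\in\Hol(\D,\A)$, so $\Gamma_t$ is differentiable in $z$ with derivative $\Gamma_t'$; the semigroup $\{F_s\}$ is differentiable in $s$ with $\frac{\partial}{\partial s}F_s(z)\big|_{s=0}=f(z)$ by~\eqref{gener}; and $\frac{\partial}{\partial s}\Gamma_s(z)\big|_{s=0}=B(z)$ by the definition~\eqref{B}. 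For the left-hand side I would invoke Theorem~\ref{th-sol-cauchy}(b), which gives $\frac{\partial}{\partial s}\Gamma_{t+s}(z)\big|_{s=0}=\frac{\partial}{\partial t}\Gamma_t(z)=B(F_t(z))\Gamma_t(z)$.

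Carrying out the differentiation on the right-hand side with the (non-commutative) product rule $\frac{d}{ds}(PQ)=P'Q+PQ'$, where $P(s)=\Gamma_t(F_s(z))$ and $Q(s)=\Gamma_s(z)$, I must keep the factors in their correct order. The term $P'(0)Q(0)$ equals $\Gamma_t'(z)f(z)$ (obtained by the ordinary chain rule applied to the composite, using $F_0(z)=z$) multiplied on the right by $\Gamma_0(z)=1_\A$, while the term $P(0)Q'(0)$ equals $\Gamma_t(z)$ multiplied on the right by $B(z)$. Since $f(z)$ is a scalar it commutes with every $\A$-valued factor, so equating the two sides yields
\[
B(F_t(z))\Gamma_t(z)=f(z)\,\Gamma_t'(z)+\Gamma_t(z)B(z),
\]
and solving for $\Gamma_t'(z)$ gives precisely the asserted formula.

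The two points requiring care — which I regard as the main, though modest, obstacles — are the following. First, the application of the ordinary chain rule to the composite $s\mapsto\Gamma_t(F_s(z))$ must be justified; this rests on $\Gamma_t$ being holomorphic in $z$ (hence differentiable with continuous derivative) together with the differentiability of $s\mapsto F_s(z)$, both of which are in hand. Second, the division by $f(z)$ is literally valid only where $f(z)\neq0$; at a point where $f$ vanishes (such as the interior fixed point $z_0$ of $\mathcal{F}$) the right-hand side must be understood as the holomorphic extension of $\Gamma_t'$. This is legitimate because $\Gamma_t'(z)$ is holomorphic on all of $\D$ while the bracketed numerator vanishes wherever $f$ does, so the apparent singularity is removable and the identity persists by continuity.
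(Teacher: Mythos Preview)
Your proof is correct and follows essentially the same route as the paper's: both differentiate the chain rule $\Gamma_{t+s}(z)=\Gamma_t(F_s(z))\Gamma_s(z)$ with respect to $s$ at $s=0$, the paper writing out the first-order Taylor expansion of $\Gamma_t(F_s(z))$ explicitly where you invoke the product and chain rules by name. Your remark on the removable singularity at zeros of $f$ is a point the paper does not address.
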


\begin{proof}
It follows by Theorem~\ref{th-sol-cauchy} that $\Gamma_t(z)$ is
differentiable at every point, hence the limit
\begin{eqnarray*}
\lim_{s\to0}\frac1s \left[ \Gamma_{t+s}(z)-\Gamma_t(z)\right] =
\lim_{s\to0}\frac1s \left[
\Gamma_{t}(F_s(z))\Gamma_s(z)-\Gamma_t(z)\right]
\end{eqnarray*}
exists. Since $\Gamma_t\in\Hol(\D,\A)$, we conclude that
\[
\Gamma_{t}(F_s(z))= \Gamma_t(z) +
{\Gamma_t}'(z)(F_s(z)-z)+o(F_s(z)-z).
\]
Therefore,
\begin{eqnarray*}
\lim_{s\to0}\frac1s \left[ \Gamma_{t+s}(z)-\Gamma_t(z)\right] =
\Gamma_t(z)B(z) +{\Gamma_t}'(z)f(z).
\end{eqnarray*}
On the other hand, the same limit equals $B(F_t(z))\Gamma_t(z)$ by
\eqref{lim-2}. So, the conclusion follows.
\end{proof}

We now use Theorem~\ref{th-sol-cauchy} to estimate the growth of
semicocycles. For this we need the following notation.
Given a semicocycle $\left\{\Gamma_t\right\}_{t\ge0}$ and a subset
$\mathcal{D}\subseteq\D$, denote
\[
\left\|\Gamma_t\right\|_{\mathcal{D}}:=\sup\left\{ \left\|
\Gamma_t(z) \right\|,\ z\in\mathcal{D}\right\} \in[0,\infty].
\]

\begin{theorem}\label{th-estim}
Let $\left\{\Gamma_t\right\}_{t\ge0}\subset \Hol(\D,\A)$ be a
semicocycle over a semigroup $\mathcal{F}=\{F_t\}_{t\ge0}$ and
$\mathcal{D}\subseteq\D$ be an $\mathcal{F}$-invariant domain. For
any $K \in \R$, the following assertions are equivalent
\begin{itemize}
  \item [(i)] $\|\Gamma_t(z)\|\leq e^{Kt}$ for all $z \in \mathcal{D}$;
\vspace{3mm}
  \item [(ii)]
$
\limsup\limits_{t\to 0^+} \frac{1}{t} \left[\left\|\Gamma_t
\right\|_\mathcal{D} -1\right]\leq K
$;
  \item [(iii)] $
\mu(B(z))\leq K
$ for all $z \in \mathcal{D}$, where ${B(z)=\frac{d}{dt}\Gamma_t(z)\Big|_{t=0}}$, and $\mu$ is defined by \eqref{mu}.
\end{itemize}
\end{theorem}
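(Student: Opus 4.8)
The plan is to prove the cycle of implications (iii) $\Rightarrow$ (i) $\Rightarrow$ (ii) $\Rightarrow$ (iii). The engine of the whole argument is Theorem~\ref{th-sol-cauchy}(b), which tells us that for each fixed $z\in\D$ the $\A$-valued map $v(t)=\Gamma_t(z)$ is the unique solution of the evolution problem~\eqref{cauchy} with coefficient $a(s)=B(F_s(z))$, together with the a priori norm estimate~\eqref{estim-u} from Theorem~\ref{th-sol-cauchy_1}(ii). I will also use that the function $t\mapsto\|1_\A+tB(z)\|$ is convex with value $1$ at $t=0$, so the defining difference quotient in~\eqref{mu} is monotone and the limit $\mu(B(z))$ genuinely exists.

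For (iii) $\Rightarrow$ (i), fix $z\in\mathcal{D}$. Since $\mathcal{D}$ is $\mathcal{F}$-invariant, we have $F_s(z)\in\mathcal{D}$ for every $s\ge0$, whence $\mu(B(F_s(z)))\le K$ by (iii). Feeding $a(s)=B(F_s(z))$ into~\eqref{estim-u} then gives
\[
\|\Gamma_t(z)\|\le \exp\left(\int_0^t \mu(B(F_s(z)))\,ds\right)\le e^{Kt},
\]
which is exactly (i). The implication (i) $\Rightarrow$ (ii) is immediate: taking the supremum over $z\in\mathcal{D}$ in (i) yields $\|\Gamma_t\|_\mathcal{D}\le e^{Kt}$, so that $\frac1t[\|\Gamma_t\|_\mathcal{D}-1]\le \frac{e^{Kt}-1}{t}\to K$ as $t\to0^+$, giving the required bound on the $\limsup$.

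The remaining implication (ii) $\Rightarrow$ (iii) is where the differentiability supplied by Theorem~\ref{th-sol-cauchy}(b) is used. For fixed $z\in\mathcal{D}$ write $\Gamma_t(z)=1_\A+tB(z)+o(t)$; applying the reverse triangle inequality one gets $\bigl|\,\|\Gamma_t(z)\|-\|1_\A+tB(z)\|\,\bigr|\le \|o(t)\|=o(t)$, and dividing by $t$ shows
\[
\lim_{t\to0^+}\frac1t\bigl[\|\Gamma_t(z)\|-1\bigr]
=\lim_{t\to0^+}\frac1t\bigl[\|1_\A+tB(z)\|-1\bigr]=\mu(B(z)).
\]
Since $\|\Gamma_t(z)\|\le\|\Gamma_t\|_\mathcal{D}$ for each fixed $z\in\mathcal{D}$, we have $\frac1t[\|\Gamma_t(z)\|-1]\le\frac1t[\|\Gamma_t\|_\mathcal{D}-1]$; passing to $\limsup_{t\to0^+}$ and invoking (ii) yields $\mu(B(z))\le K$ for every $z\in\mathcal{D}$, which is (iii).

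The step I expect to require the most care is this last identification, namely transferring the first-order expansion of $\Gamma_t(z)$ to the scalar quantity $\frac1t[\|\Gamma_t(z)\|-1]$ and matching it with the defining limit of $\mu(B(z))$; it relies on both the $t$-differentiability from Theorem~\ref{th-sol-cauchy}(b) and the existence of the limit in~\eqref{mu}. By contrast, the other two implications are essentially bookkeeping on top of the integral estimate~\eqref{estim-u} and the $\mathcal{F}$-invariance of $\mathcal{D}$.
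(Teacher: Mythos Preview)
Your proof is correct and follows essentially the same cycle (i) $\Rightarrow$ (ii) $\Rightarrow$ (iii) $\Rightarrow$ (i) as the paper, with the same ingredients: the elementary estimate $\frac{e^{Kt}-1}{t}\to K$ for (i) $\Rightarrow$ (ii), the first-order expansion $\Gamma_t(z)=1_\A+tB(z)+o(t)$ for (ii) $\Rightarrow$ (iii), and the integral bound~\eqref{estim-u} together with $\mathcal{F}$-invariance for (iii) $\Rightarrow$ (i). Your treatment is in fact slightly more detailed than the paper's (you make the role of $\mathcal{F}$-invariance explicit and observe that the limit defining $\mu$ actually exists by convexity), but there is no substantive difference in approach.
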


\begin{proof}
Suppose (i) holds. Then
\begin{eqnarray*}
\limsup\limits_{t\to 0^+} \frac{1}{t} \left[\left\|\Gamma_t
\right\|_\mathcal{D} -1\right]
\leq \limsup\limits_{t\to 0^+} \frac{1}{t} \left[e^{Kt}
 -1\right]=K,
\end{eqnarray*}
that is, (ii) is satisfied.

Assume now that (ii) holds. Fix
$z \in \mathcal{D}$. By definition of $B$, we have
\[
\Gamma_t(z)=1_\A+ tB(z)+o(t) \quad \mbox{ as } t \to 0.
\]
Therefore
\begin{eqnarray*}
&&\limsup\limits_{t\to 0^+} \frac{1}{t} \left[\left\|1_{\A}+tB(z)
\right\| -1\right]= \limsup\limits_{t\to 0^+} \frac{1}{t} \left[\left\|\Gamma_t(z)
\right\| -1\right]\\ &\leq& \limsup\limits_{t\to 0^+} \frac{1}{t} \left[\left\|\Gamma_t
\right\|_\mathcal{D} -1\right]\leq K.
\end{eqnarray*}
Thus (iii) follows.

Finally, if (iii) holds, then by Theorem \ref{th-sol-cauchy_1}(ii)
 we have (i).
\end{proof}

\begin{example}\label{exa-nonexp}
Denote
\[
\Gamma_t(z) =\frac {1+\sqrt{1-e^{-t} z}}   {1+\sqrt{1-z}}e^t,
\]
where the root branches are chosen such that $\sqrt{1}=1.$ A
direct calculation shows that the family $\{\Gamma_t\}_{t\ge0}$ is
a semicocycle over the linear semigroup
$\mathcal{F}=\{e^{-t}\cdot\} _{t\ge0}$. One can easily see that
\[
\left\|\Gamma_t\right\|_{\D}:=e^t\sup\limits_{z\in\D}\left|\frac
{1+\sqrt{1-e^{-t} z}}{1+\sqrt{1-z}} \right| \ \left\{
\begin{array}{l} \displaystyle \le \left(
1+ \sqrt{1+e^{-t}}\right) e^t \le 3e^t ,\vspace{2mm}\\
\ge |\Gamma_t(1)|=\left(1+\sqrt{1-e^{-t}}\right)e^t.
\end{array} \right.
\]
The upper inequality means that each $\Gamma_t$ is bounded on
$\D$. At the same time, the lower inequality implies that
\[
\frac{1}{t} \left[\left\|\Gamma_t\right\|_{\D} -1\right] \geq
\frac{\left(1+\sqrt{1-e^{-t}}\right)e^t-1}{t}=\frac{e^t-1}{t}+
\sqrt{\frac{e^t-1}{t}}\cdot \frac{e^{\frac{t}{2}}}{\sqrt{t}}\,.
\]
Thus
\[
\limsup_{t\to 0+} \frac{1}{t} \left[\left\|\Gamma_t\right\|_{\D}
-1\right]=\infty.
\]
Therefore by Theorem~\ref{th-estim}, $\{\Gamma_t\}_{t\ge0}$ does
not admit estimate of the form $e^{Kt}$.
\end{example}

It is interesting to compare Theorem~\ref{th-estim} with the
following assertion which was proven in \cite[Lemma 2.1]{Konig}
for the case where $\mathcal{D}=\D,$ the open unit disk, and
$\A=\C$. The proof in the general case is just a repetition of
that proof.

\begin{propo}\label{lem-konig}
Let $\left\{\Gamma_t\right\}_{t\ge0}$  be a semicocycle over some
semigroup and let $\mathcal{D}$ be an $\mathcal{F}$-invariant
domain. The following assertions are equivalent:
\begin{itemize}
  \item [(i)]
$\left\|\Gamma_t\right\|_{\mathcal{D}}<\infty$ for every
$t\ge0$.\vspace{2mm}
  \item [(ii)]  $\limsup\limits_{t\to0^+}
\left\|\Gamma_t\right\|_\mathcal{D}<\infty$;
  \item [(iii)] $\left\|\Gamma_t\right\|_\mathcal{D} \le Me^{Kt}$ for some real
$M$ and $K$.
\end{itemize}
\end{propo}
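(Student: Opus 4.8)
The plan is to reduce everything to a single scalar quantity, the growth function $\phi(t):=\|\Gamma_t\|_{\mathcal{D}}\in[0,\infty]$, and to extract from the semicocycle the one structural fact that drives the whole equivalence, namely \emph{submultiplicativity}. Indeed, by the chain rule~(a) and the $\mathcal{F}$-invariance of $\mathcal{D}$, for $z\in\mathcal{D}$ we have $F_s(z)\in\mathcal{D}$, so
\[
\|\Gamma_{t+s}(z)\|=\|\Gamma_t(F_s(z))\Gamma_s(z)\|\le\|\Gamma_t(F_s(z))\|\,\|\Gamma_s(z)\|\le\phi(t)\phi(s);
\]
taking the supremum over $z\in\mathcal{D}$ yields $\phi(t+s)\le\phi(t)\phi(s)$. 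I would also record that $\phi(0)=\|1_{\A}\|=1$ and that, being a pointwise supremum of the continuous maps $t\mapsto\|\Gamma_t(z)\|$, the function $\phi$ is lower semicontinuous, so that each sublevel set $\{t:\phi(t)\le n\}$ is closed. I would then organize the argument as the cyclic chain $(i)\Rightarrow(ii)\Rightarrow(iii)\Rightarrow(i)$.

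Two of the three implications are routine. The implication $(iii)\Rightarrow(i)$ needs nothing. For $(ii)\Rightarrow(iii)$ I would use the standard Fekete-type covering argument: assumption~(ii) provides $\delta>0$ and $M\ge1$ with $\phi\le M$ on $[0,\delta]$, and writing an arbitrary $t$ as $t=n\delta+r$ with $0\le r<\delta$, submultiplicativity gives $\phi(t)\le\phi(\delta)^{n}\phi(r)\le M^{n+1}\le M\,e^{Kt}$ with $K=\delta^{-1}\log M$. Thus the entire proposition reduces to the single implication $(i)\Rightarrow(ii)$: upgrading finiteness of each individual $\phi(t)$ to boundedness of $\phi$ on a right neighbourhood of $0$.

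For $(i)\Rightarrow(ii)$ I would first invoke Baire's theorem: under~(i), for every $T>0$ we have $[0,T]=\bigcup_{n}\{\phi\le n\}$, a countable union of closed sets, so some sublevel set has nonempty interior and $\phi$ is bounded, say by $N$, on an interval $(a,b)\subseteq[0,T]$. The decisive task is then to transport this boundedness down to $0$. Here I would fix $c\in(a,b)$ and use the invertibility of the semicocycle (Corollary~\ref{cor-inver}) to rewrite the chain rule as $\Gamma_s(z)=\Gamma_c(F_s(z))^{-1}\Gamma_{s+c}(z)$, whence for $0<s<b-c$
\[
\phi(s)\le\Big(\sup_{w\in\mathcal{D}}\|\Gamma_c(w)^{-1}\|\Big)\,\phi(s+c)\le N\cdot\sup_{w\in\mathcal{D}}\|\Gamma_c(w)^{-1}\|.
\]

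Conceptually, this reflects the fact that the chain rule makes $W_t g:=\Gamma_t\cdot(g\circ F_t)$ a one-parameter operator semigroup on bounded holomorphic $\A$-valued functions with $\|W_t\|=\phi(t)$, so that~(ii) amounts to local boundedness near $0$ of the norms of this semigroup — a property that holds once the semigroup is suitably continuous, but which is \emph{not} automatic. I expect the \textbf{main obstacle} to be precisely this: local boundedness near $0$ is not a formal consequence of submultiplicativity alone (the function $e^{1/t}$ is submultiplicative, finite for every $t>0$, yet unbounded as $t\to0^{+}$), so closing the estimate above requires genuine control of the inverse factor $\sup_{w\in\mathcal{D}}\|\Gamma_c(w)^{-1}\|$. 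Overcoming it rests on the true semicocycle structure — the joint continuity of $(t,z)\mapsto\Gamma_t(z)$ together with invertibility — exactly as in the scalar argument of \cite[Lemma 2.1]{Konig}, whose proof carries over verbatim to the Banach-algebra-valued setting.
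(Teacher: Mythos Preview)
Your proposal follows essentially the same route the paper indicates: the paper gives no explicit argument, merely asserting that K\"onig's proof for the case $\A=\C$, $\mathcal{D}=\D$ carries over verbatim, and you spell out precisely that argument --- submultiplicativity of $\phi$, the trivial implications, the Fekete covering for $(ii)\Rightarrow(iii)$, and Baire plus the inverse trick for $(i)\Rightarrow(ii)$ --- while honestly flagging the one nontrivial point (uniform control of $\sup_{w\in\mathcal{D}}\|\Gamma_c(w)^{-1}\|$) and, like the paper, deferring its resolution to \cite{Konig}. In that sense your write-up is strictly more informative than the paper's one-line referral, and the two approaches coincide.
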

This criterion of boundedness is analogous to a known estimate for
strongly semigroups of linear operators while inequality (i) in
Theorem~\ref{th-estim} is similar to estimates of uniformly
continuous semigroups; see, for example, \cite{Pa, YK}.

\begin{example}\label{exa1-exp1}
Let $\A=\C$ and $B(z)=\displaystyle \frac1{1-z}\,.$ Clearly, the
growth of semicocycles depends on the asymptotic behavior of
semigroups.

We start with the linear semigroup $\mathcal{F}=\{e^{-t}\cdot \}
_{t\ge0}$ converging to the interior point $z_0=0$. In this case
the invariant sets are just disks $\D_r,\ r\le1$.

Solving the evolution problem~\eqref{cauchy}, one finds that
\[
\Gamma_t(z) =\frac {e^t -z}   {1-z}\,.
\]
Fix some $r<1.$ Then
\[
\left\|\Gamma_t\right\|_{\D_r}:=\sup\limits_{z\in\D_r}\left| \frac
{e^t -z}{1-z} \right| =\frac{e^t-r}{1-r}
\]
and
    \[
\limsup_{t\to 0+} \frac{1}{t} \left[\left\|\Gamma_t\right\|_{\D_r}
-1\right]=\frac1{1-r}\,.
    \]
Therefore $\|\Gamma_t\|_{\D_r}\le \exp\left(K_rt\right)$ with
$K_r=\frac{1}{1-r}$. At the same time, for every $t>0$, $\Gamma_t$
is unbounded on the whole unit disk $\D$.

Now we consider the affine semigroup on $\D$ defined by $$F_t(z) =
1- (1-z)e^{-t}.$$ It converges to the boundary point $z_0=1$ and
the invariant sets are disks tangent to the unit circle at this
point. Solving again~\eqref{cauchy}, we obtain
\[
\Gamma_t(z):=\exp \left[ \frac{e^t -1}  {1-z} \right].
\]
In this case, for every $z_0\in\D$,
\[
\Gamma_t(F_s(z_0))=\exp \left[ \frac{e^s(e^t -1)} {1-z_0} \right].
\]
So, any $\Gamma_t$ is unbounded on every semigroup trajectory, and
hence cannot satisfy estimate of the form $Me^{Kt}$ considered in
Proposition~\ref{lem-konig}.
\end{example}

\bigskip

\section{Linearization of semicocycles}\label{sect-linea}
\setcounter{equation}{0}

\subsection{Statement of the problem}

As above, let $\mathcal{F}=\{F_t\}_ {t\ge0}$ be a semigroup on the
open unit disk $\D$. In Section~\ref{sect-diff}, we have seen that
each semicocycle can be generated by solving a non-autonomous
linear differential equation. A different way to obtain
semicocycles is based on the following simple assertion.

\begin{lemma}\label{propo-gener}
Let $M\in\Hol(\D,\A_*)$ and $B_0\in \A$. Then the family
$\left\{\Gamma_t\right\}_{t\ge0}\subset\Hol(\D,\A_*)$ defined by
\begin{equation}\label{rep}
\Gamma_t(z)=M(F_t(z))^{-1}e^{tB_0}M(z)
\end{equation}
is a semicocycle over $\mathcal{F}$.
\end{lemma}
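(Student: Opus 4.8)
The plan is to verify directly that the family defined by \eqref{rep} satisfies the two defining properties of a semicocycle from Definition~\ref{semicocycle}, namely the chain rule (a) and the normalization (b), together with the requisite regularity. The normalization is immediate: setting $t=0$ in \eqref{rep} and using $F_0(z)=z$ gives $\Gamma_0(z)=M(z)^{-1}e^{0}M(z)=M(z)^{-1}M(z)=1_{\A}$, since $e^{0\cdot B_0}=1_{\A}$. This is property (b).

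The main computation is the chain rule (a), which asks to show $\Gamma_t(F_s(z))\Gamma_s(z)=\Gamma_{t+s}(z)$. First I would substitute the definition into the left-hand side:
\begin{equation*}
\Gamma_t(F_s(z))\Gamma_s(z)=M(F_t(F_s(z)))^{-1}e^{tB_0}M(F_s(z))\cdot M(F_s(z))^{-1}e^{sB_0}M(z).
\end{equation*}
The key observation is that the middle factors $M(F_s(z))$ and $M(F_s(z))^{-1}$ cancel, which is where the hypothesis $M\in\Hol(\D,\A_*)$ is used to guarantee invertibility. After this cancellation the expression collapses to
\begin{equation*}
M(F_t(F_s(z)))^{-1}e^{tB_0}e^{sB_0}M(z).
\end{equation*}
Now I would invoke the semigroup property $F_t\circ F_s=F_{t+s}$ to rewrite $F_t(F_s(z))=F_{t+s}(z)$, and use the fact that $e^{tB_0}e^{sB_0}=e^{(t+s)B_0}$, which is valid because $tB_0$ and $sB_0$ commute (both are scalar multiples of the single element $B_0$). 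These two substitutions yield $M(F_{t+s}(z))^{-1}e^{(t+s)B_0}M(z)=\Gamma_{t+s}(z)$, completing the verification of (a).

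It remains to address regularity: that each $\Gamma_t\in\Hol(\D,\A_*)$ and that $(t,z)\mapsto\Gamma_t(z)$ is continuous, as required by Definition~\ref{semicocycle}. Since $M\in\Hol(\D,\A_*)$, holomorphy of $z\mapsto M(F_t(z))^{-1}$ follows from holomorphy of $M$, holomorphy of $F_t$, the chain rule, and the fact that inversion is holomorphic on $\A_*$; the remaining factor $e^{tB_0}M(z)$ is plainly holomorphic in $z$, and a product of holomorphic $\A$-valued mappings is holomorphic, with values in $\A_*$ as a product of invertibles. Joint continuity in $(t,z)$ follows from continuity of $(t,z)\mapsto F_t(z)$, continuity of $M$ and of inversion on $\A_*$, and continuity of $t\mapsto e^{tB_0}$. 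I expect no genuine obstacle here; the argument is essentially a formal cancellation relying on invertibility of $M$ and the two group laws for $F$ and for the exponential, so the only points requiring a word of care are the use of $\A_*$ to justify the cancellation and the commutativity of $tB_0$ with $sB_0$ in combining the exponentials.
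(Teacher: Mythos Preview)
Your proof is correct and follows essentially the same approach as the paper: a direct verification of the normalization and the chain rule via the same cancellation of $M(F_s(z))M(F_s(z))^{-1}$ combined with the semigroup property and $e^{tB_0}e^{sB_0}=e^{(t+s)B_0}$. You are somewhat more explicit about the regularity (holomorphy and joint continuity), which the paper simply declares ``obvious''.
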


\begin{proof}
Obviously, $\Gamma_t\in\Hol(\D,\A)$, is jointly continuous with
respect to $(t,z)$ and $\Gamma_0(z)=1_{\A}$. In addition, we have
    \begin{eqnarray*}
    \Gamma_t(F_s(z))\Gamma_s(z)&=&M(F_t(F_s(z)))^{-1}e^{tB_0}
    M(F_s(z))M(F_s(z))^{-1}e^{sB_0}M(z)    \\
    &=&M(F_t(F_s(z)))^{-1}e^{(t+s)B_0}M(z)=\Gamma_{t+s}(z). \end{eqnarray*}
So, $\left\{\Gamma\right\}_{t\ge0}$ is a semicocycle.
\end{proof}

\medskip

A semicocycle obtained in this way is cohomologous to the constant
semicocycle $\tilde{\Gamma}_t(z)=e^{t B_0}$.

Our central question in this section is:\vspace{3mm}

$\bullet$ \ Which semicocycles admit the representation
\eqref{rep}, for an appropriate choice of $B_0\in\A$ and
$M\in\Hol(\D,\A_*)$?
\vspace{3mm}

We will call a representation of the form \eqref{rep} a
`linearization' of the semicocycle $\Gamma_t$, and if such a
representation exists we will say that the semicocycle is
`linearizable'. As explained in Introduction, this terminology is
intended to convey an analogy with the linearization of
semigroups.

It was shown in \cite{Konig, J-97} that if $\A=\C$ and the
semigroup $\Ff$ has no interior fixed point then each semicocycle
can be represented in the form $\Gamma_t(z)=M(F_t(z))^{-1}M(z)$,
where $M\in\Hol(\D,\C\setminus\{0\})$. For the case where $\Ff$
has an interior fixed point $z_0\in\D$ the same representation was
deduced with $M$ analytic in $\D\setminus\{z_0\}$ --- but not in
$\D$. The following example shows that the generalized
representation~\eqref{rep} is more relevant since it allows to
describe a larger family of semicocycles using functions analytic
in the whole disk.

\begin{example}
Let a semigroup $\mathcal{F}= \{F_t\}_{t\ge0}\subset\Hol(\D)$
preserve zero and be generated by a function $f\in\Hol(\D,\C)$
with $\Re f'(0)<0$. Define the family
$\left\{\Gamma_t\right\}_{t\ge0}$ as follows:
 \[
 \Gamma_t(z)=\left(\frac{F_t(z)}z\right)^\beta
 \left(m(F_t(z))\right)^{-1}m(z),
 \]
where $\beta$ is any real number and a function
$m\in\Hol(\D,\A_*)$ is normalized by $m(0)=1_{\A}$. It can be
verified directly that this family is a semicocycle
over~$\mathcal{F}$. In order to represent it by formula
\eqref{rep}, recall (see \eqref{konigs}) that for such semigroup
there is the unique solution $h$ of the Schr\"oder functional
equation
 \[
 h(F_t(z))=e^{tf'(0)}h(z)
 \]
normalized by $h'(0)=1$. Hence, taking $\displaystyle
M(z)=\left(\frac{h(z)}{z}\right)^\beta m(z)$, we get
 \[
 \Gamma_t(z)=e^{t\beta f'(0)} \left(M(F_t(z))\right)^{-1}M(z),
 \]
which coincides with \eqref{rep}.
\end{example}

We note that the representation \eqref{rep}, if it exists, is not unique: if we
choose any $A\in \A_*$ and define
      \begin{eqnarray}\label{A}
\tilde{M}(z)=AM(z),\quad \tilde{B}_0=AB_0A^{-1},
      \end{eqnarray}
then
    \begin{eqnarray*}
    \tilde{\Gamma}_t(z)=\tilde{M}(F_t(z))^{-1}e^{t\tilde{B}_0}\tilde{M}(z)&=&
    M(F_t(z))^{-1}A^{-1}e^{t AB_0 A^{-1}}AM(z) \nonumber  \\
    &=&M(F_t(z))^{-1}e^{t B_0}M(z)= \Gamma_t(z).
    \end{eqnarray*}
This fact allows us to chose the representation in a convenient
form depending on the location of the Denjoy--Wolff point of
$\Ff$.

$\bullet$  If the semigroup $\Ff$ has a boundary Denjoy--Wolff
point $z_0\in\partial\D$, then there exists a univalent function
$h$ that satisfies Abel's functional equation
\[
h\circ F_t =h+t\quad\mbox{on\ }\D
\]
(see, {\it e.g.}, \cite{E-S-book}). Hence, if a semicocycle
$\left\{\Gamma_t\right\}_{t\ge0}$ over $\Ff$ admits representation
\eqref{rep}, then
\begin{eqnarray*}
\Gamma_t(z)&=&M(F_t(z))^{-1}e^{tB_0}M(z) \\
& =& M(F_t(z))^{-1}e^{(h(F_t(z))-h(z))B_0}M(z) \\
&=& M_1(F_t(z))^{-1}M_1(z),\quad \mbox{where}\quad
M_1(z)=e^{-h(z)B_0}M(z).
\end{eqnarray*}
Recall that semicocycles which can be represented in the form
\eqref{rep} with $B_0=0$, that is
$\Gamma_t(z)=M(F_t(z))^{-1}M(z)$, are known as coboundaries. Thus
\begin{propo}\label{pro}
If $\Ff$ has a boundary Denjoy--Wolff point and a semicocycle over
$\Ff$ admits representation of the form \eqref{rep}, then this
semicocycle is a coboundary.
\end{propo}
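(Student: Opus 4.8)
The plan is to turn the parameter $t$ into a difference of values of the linearizing function and then absorb the resulting exponential factors into a new transfer mapping. Since $\Ff$ has a boundary Denjoy--Wolff point, I would first invoke the known existence of a univalent $h\in\Hol(\D,\C)$ satisfying Abel's equation $h\circ F_t=h+t$ on $\D$. The crucial feature to emphasize is that $h$ is holomorphic on the \emph{whole} disk $\D$; this is what will make the transfer mapping holomorphic on all of $\D$, in contrast with the interior fixed-point situation, where one only has a Schr\"oder function and the corresponding transfer map is analytic merely on $\D\setminus\{z_0\}$.

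Starting from the hypothesis $\Gamma_t(z)=M(F_t(z))^{-1}e^{tB_0}M(z)$, I would substitute $t=h(F_t(z))-h(z)$ into the exponent, obtaining $e^{tB_0}=e^{(h(F_t(z))-h(z))B_0}$, and then split this as $e^{h(F_t(z))B_0}\,e^{-h(z)B_0}$. The key point is that this factorization is legitimate even in the non-commutative algebra $\A$: both exponents are scalar multiples of the single element $B_0$ and hence commute, so no Baker--Campbell--Hausdorff correction appears. This is precisely where the additive (rather than multiplicative) structure of Abel's equation makes the boundary case free of the commutator obstructions that complicate the interior case treated later in the paper.

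Next I would set $M_1(z):=e^{-h(z)B_0}M(z)$ and verify that $M_1\in\Hol(\D,\A_*)$. Holomorphy follows because $z\mapsto -h(z)B_0$ is holomorphic into $\A$ and the exponential map on $\A$ is entire, so the composition is holomorphic; invertibility follows because $e^{-h(z)B_0}$ is always invertible, with inverse $e^{h(z)B_0}$, and $M(z)\in\A_*$ by assumption. Using $M_1(F_t(z))^{-1}=M(F_t(z))^{-1}e^{h(F_t(z))B_0}$ together with the factorization above, a one-line rearrangement then gives $\Gamma_t(z)=M_1(F_t(z))^{-1}M_1(z)$, which is exactly the coboundary form, i.e.\ representation \eqref{rep} with $B_0=0$.

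I do not anticipate any serious obstacle: the argument is essentially the displayed computation preceding the statement, made precise. The only points requiring genuine care are the appeal to the global univalent solution $h$ of Abel's equation on all of $\D$ (the available result in the boundary case) and the check that $M_1$ takes values in $\A_*$ and is holomorphic; the commuting-exponent factorization is what renders the non-commutativity harmless here, and it is the single step one should state explicitly rather than leave implicit.
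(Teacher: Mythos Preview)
Your proposal is correct and follows essentially the same approach as the paper: invoke the Abel function $h$ with $h\circ F_t=h+t$, rewrite $e^{tB_0}=e^{h(F_t(z))B_0}e^{-h(z)B_0}$, and absorb the factors into $M_1(z)=e^{-h(z)B_0}M(z)$. Your added remarks on why the exponential factorization is valid (the exponents are scalar multiples of a single element $B_0$) and on the holomorphy and invertibility of $M_1$ make explicit points the paper leaves tacit, but the argument is the same.
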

This explains why in the above-mentioned result from \cite{Konig,
J-97} there was no need to introduce a term $e^{B_0 t}$ in the
representation in the case of a boundary Denjoy--Wolff point.

$\bullet$  On the other hand, if $z_0\in\D$ is an interior fixed
point of $\mathcal{F}$, then we may choose $A=M(z_0)^{-1}$ in
\eqref{A}, so that we obtain $\tilde{M}(z_0)=1_{\A}$. Therefore we
may assume, without loss of generality, that the mapping $M$ in
\eqref{rep} satisfies the normalization
\begin{equation}\label{norm}
M(z_0)=1_{\A},
\end{equation}
which we will henceforth do.

As above, denote $
B(z)=\frac{d}{dt}\Gamma_t(z)\Big|_{t=0}.$ From
\eqref{rep}--\eqref{norm} we have
$$\Gamma_t(z_0)=M(z_0)^{-1}e^{tB_0}M(z_0)= e^{tB_0},$$
hence
\[
    B(z_0)=\frac{d}{dt}\Gamma_t(z_0)\Big|_{t=0}=
    \frac{d}{dt}e^{tB_0}\Big|_{t=0}=B_0.
\]
Thus $B_0$ is uniquely determined by
\begin{equation}\label{B0}
B_0=B(z_0).
\end{equation}

From this and \eqref{A} we see that a necessary condition for a
semicocycle $\Gamma_t$ to be a coboundary is that its generator
$B$ satisfy $B(z_0)=0$.

We now show that it is not true, in general, that semicocycles are linearizable.

\begin{example}\label{ex-not-rep}
Let $\A$ be the Banach algebra $M_2(\C)$ of $2\times2$ matrices,
equipped with the operator norm. Consider the linear semigroup
$\mathcal{F}= \{e^{-t}\cdot\}_{t\ge0}$ acting on $\D$ and define the
family $\left\{\Gamma_t\right\}_{t\ge0}$ by
 \[
 \Gamma_t(z)=\left( \begin{array}{cc}
e^t &  zt e^t\\
0 & e^{2t}
\end{array} \right).
 \]
A straightforward computation shows that
$\left\{\Gamma_t\right\}_{t\ge0}$ is a semicocycle
over~$\mathcal{F}$. Differentiating this semicocycle at $t=0$, one
sees that its generator is
 $\displaystyle B(z)= \left(\begin{array}{cc}
1 & z \\
0 & 2
\end{array}  \right), $
so that
 $\displaystyle B_0=B(0)= \left(\begin{array}{cc}
1 & 0 \\
0 & 2
\end{array}  \right). $

Assume that $\Gamma_t(z)$ can be represented by \eqref{rep} with
some
\[
M(z)=\left( \begin{array}{cc}
\eta_{11}(z)  &  \eta_{12}(z)\\
\eta_{21}(z) & \eta_{22}(z)
\end{array} \right) \in\Hol(\D,\A_*),
\]
that is, $M(F_t(z))\Gamma_t(z)=e^{tB_0}M(z)$. Multiplying these
matrices and equating elements of the first row we get
\begin{eqnarray*}
\eta_{11}(e^{-t}z)\cdot e^t &=& e^{t} \cdot\eta_{11}(z)  \\
\eta_{11}(e^{-t}z) \cdot z te^t
+\eta_{12}(e^{-t}z)\cdot e^{2t} &=& e^{2t}\cdot \eta_{12}(z).
\end{eqnarray*}
By taking $t\rightarrow\infty$ in the first equality and using the
normaliztion $\eta_{11}(0)=1$, we conclude that $\eta_{11}(z)$ is
a constant function, $\eta_{11}=1$, so that the second equality
can be written as
$$ z \cdot t\cdot e^{-t}
+\eta_{12}(e^{-t}z) =  \eta_{12}(z),$$
which leads to a contradiction when taking $t\rightarrow\infty$, since $\eta_{12}(0)=0$.
Therefore, $\Gamma_t(z)$ cannot be represented by \eqref{rep}.
\end{example}

In the following we will analyze the problem of linearization, and
gain a good understanding of the obstructions to linearizability,
which in particular will give us a deeper explanation for the
non-linearizability in the above example.

We will first show that in the case of semicocycles whose range
$\A$ is {\it{commutative}}, every semicocycle is linearizable, and
the linearizing mapping $M$ can be represented in a simple
explicit way. Moving to the non-commutative case, we will obtain a
sufficient condition for linearizability, depending only on the
value $B_0=B(z_0)$ of the semicocycle generator at the interior
fixed point of the semigroup. This condition will also be shown to
be sharp. In the case where $\A$ is finite-dimensional, it will be
seen that the condition for linearizability is equivalent to a
simple condition on the eigenvalues of $B_0$, from which it will
follow that it holds for `generic' $B_0\in \A$.

\subsection{The commutative case}

In this subsection we assume that the algebra $\A$ is commutative.
We show that every semicocycle is linearizable. Our results
somewhat generalize results in \cite{Konig, J-97}, which
considered $\A=\C$.

We begin with the case where $\Ff$ has no interior fixed point.
\begin{theorem}\label{th-konig}
Let $\mathcal{F}=\left\{F_t\right\}_{t\ge0}\subset\Hol(\D)$ be a
semigroup generated by $f\in\Hol(\D,\C)$ with no  interior fixed
point. Let $\{\Gamma_t\}_{t\ge0} \subset\Hol(\D,\A)$ be a
semicocycle over $\mathcal{F}$, generated by $B$. Then
$\Gamma_t(z)$ is linearizable, namely, $\Gamma_t(z) =
M(F_t(z))^{-1}M(z),$  where
\begin{equation}\label{M}
M(z) = \exp\left( -\int_0^z\frac1{f(w)}B(w)dw  \right).
\end{equation}
\end{theorem}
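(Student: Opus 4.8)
The plan is to verify the explicit formula \eqref{M} directly, by showing that the mapping $\tilde{\Gamma}_t(z):=M(F_t(z))^{-1}M(z)$ solves the same evolution problem \eqref{cauchy} as $\Gamma_t$, and then invoking uniqueness. First I would justify that $M$ is well defined. Since $\mathcal{F}$ has no interior fixed point, its generator $f$ has no zero in $\D$, so $w\mapsto \frac1{f(w)}B(w)$ is a holomorphic $\A$-valued mapping on the simply connected domain $\D$; hence the integral in \eqref{M} is path-independent and $M\in\Hol(\D,\A)$. Being the exponential of a holomorphic mapping, $M(z)$ is invertible for every $z$ (its inverse is the exponential of the negated integral), so in fact $M\in\Hol(\D,\A_*)$, as required.

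Next, observe that by Lemma~\ref{propo-gener} (with $B_0=0$) the family $\tilde{\Gamma}_t$ is already a semicocycle over $\mathcal{F}$, and clearly $\tilde{\Gamma}_0(z)=1_\A$; so by Theorem~\ref{th-sol-cauchy} it suffices to check that $\tilde{\Gamma}_t$ satisfies the evolution equation in \eqref{cauchy}, after which the uniqueness part of Theorem~\ref{th-sol-cauchy} forces $\tilde{\Gamma}_t=\Gamma_t$. To compute $\frac{\partial}{\partial t}\tilde{\Gamma}_t$, I would write $M(z)=\exp(\Phi(z))$ with $\Phi'(z)=-\frac1{f(z)}B(z)$. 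Here commutativity enters decisively: because $\A$ is commutative, $\Phi'(z)$ commutes with $\Phi(z)$, so $M'(z)=\Phi'(z)M(z)=-\frac1{f(z)}B(z)M(z)$ — a step that fails in the non-commutative case, where the derivative of a matrix exponential is given by a more complicated expression. Combining this with the chain rule and $\frac{\partial}{\partial t}F_t(z)=f(F_t(z))$ (and using that $f$ is scalar, so the factors $\frac1{f(F_t(z))}$ and $f(F_t(z))$ cancel) yields $\frac{\partial}{\partial t}M(F_t(z))=-B(F_t(z))M(F_t(z))$. Then the identity $\frac{\partial}{\partial t}A^{-1}=-A^{-1}A'A^{-1}$ gives $\frac{\partial}{\partial t}M(F_t(z))^{-1}=M(F_t(z))^{-1}B(F_t(z))$.

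Finally, using commutativity once more to move the factor $B(F_t(z))$ to the front, I would conclude
\[
\frac{\partial}{\partial t}\tilde{\Gamma}_t(z)=\left(\frac{\partial}{\partial t}M(F_t(z))^{-1}\right)M(z)=B(F_t(z))M(F_t(z))^{-1}M(z)=B(F_t(z))\tilde{\Gamma}_t(z).
\]
Thus $\tilde{\Gamma}_t$ solves \eqref{cauchy} with the same initial data as $\Gamma_t$, and Theorem~\ref{th-sol-cauchy} gives $\Gamma_t(z)=\tilde{\Gamma}_t(z)=M(F_t(z))^{-1}M(z)$, as claimed. I do not anticipate a serious obstacle: the only substantive points are the well-definedness of the integral (handled by the no-fixed-point hypothesis together with simple connectivity of $\D$) and the two places where commutativity of $\A$ is invoked. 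These are precisely the features that make the commutative case tractable and that will break down in the general setting analyzed later, which explains why the clean closed-form $M$ cannot persist beyond the commutative situation.
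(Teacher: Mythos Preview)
Your proposal is correct and follows essentially the same route as the paper: define $\tilde{\Gamma}_t(z)=M(F_t(z))^{-1}M(z)$, use commutativity of $\A$ to differentiate and obtain $\frac{\partial}{\partial t}\tilde{\Gamma}_t(z)=B(F_t(z))\tilde{\Gamma}_t(z)$, then invoke uniqueness of solutions to the evolution problem \eqref{cauchy}. The only cosmetic difference is that the paper collapses $M(F_t(z))^{-1}M(z)$ into the single exponential $\exp\bigl(\int_z^{F_t(z)}\frac{1}{f(w)}B(w)\,dw\bigr)$ before differentiating, whereas you differentiate $M(F_t(z))^{-1}$ and $M(z)$ separately; your more detailed justification of why $M$ is well defined (no zero of $f$ in $\D$, simple connectivity) is a welcome addition.
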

\begin{proof}
We first note that $M(z)$ in \eqref{M} is well-defined and
$M\in\Hol(\D,\A_*)$. Denote
\begin{eqnarray*}
\widetilde{\Gamma}_t(z) := M(F_t(z))^{-1}M(z) = \exp\left(
\int_z^{F_t(z)}\frac1{f(w)}B(w)dw \right) .
\end{eqnarray*}
Because $\A$ is commutative, differentiation of this equality
gives
$$\frac{\partial \widetilde{\Gamma}_t(z)}{\partial t}=
B(F_t(z))\widetilde{\Gamma}_t(z).$$

Since $\Gamma_t$ satisfies the same differential equation and
$\widetilde{\Gamma}_0(z)=\Gamma_0(z)=1_\A$, the uniqueness theorem
completes the proof.
\end{proof}

We proceed with the case where $\Ff$ has an interior fixed point.
Here, in contrast with \cite{Konig, J-97}, we prove that every
semicocycle is linearizable by a function $M$ holomorphic in the
whole unit disk $\D$.

\begin{theorem}\label{th-present-Gamma}
Let $\mathcal{F}=\left\{F_t\right\}_{t\ge0}\subset\Hol(\D)$ be a
semigroup with an attractive fixed point $z_0\in \D$. Let $\A$ be commutative and
$\{\Gamma_t\}_{t\ge0} \subset\Hol(\D,\A)$ be a semicocycle over
$\mathcal{F}$. Then $\Gamma_t(z)$ is
linearizable, namely,
\[
\Gamma_t(z) =  M(F_t(z))^{-1}e^{tB_0}M(z),
\]
where $B(z)=\frac{d}{dt}\left.\Gamma_t(z)\right|_{t=0}$, $B_0=B(z_0)$, and
 $$M(z)=\exp\left[
\int_0^\infty \left(B(F_t(z))-B_0\right)dt\right].$$
\end{theorem}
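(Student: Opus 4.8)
The plan is to follow the strategy of Theorem~\ref{th-konig}: set
\[
\widetilde{\Gamma}_t(z) := M(F_t(z))^{-1}e^{tB_0}M(z),
\]
verify that $\widetilde{\Gamma}_t$ solves the same evolution problem~\eqref{cauchy} that $\Gamma_t$ solves, and then invoke the uniqueness part of Theorem~\ref{th-sol-cauchy_1} to conclude that $\widetilde{\Gamma}_t=\Gamma_t$. The defining formula for $M$ is tailored precisely so that this works; the novelty relative to Theorem~\ref{th-konig} is that here the integrand must be regularized by subtracting $B_0$ in order that the \emph{improper} integral converge.

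The first step is to establish that $M$ is well defined, i.e.\ that $\int_0^\infty\bigl(B(F_s(z))-B_0\bigr)\,ds$ converges uniformly on compact subsets of $\D$. Here the hypothesis that $z_0$ is an attractive interior fixed point is essential. Since $f(z_0)=0$ and $\Re f'(z_0)<0$, the K{\oe}nigs representation~\eqref{konigs} gives $h(F_s(z))=e^{sf'(z_0)}h(z)$ with $h$ biholomorphic near $z_0$ and $h(z_0)=0$, whence $|F_s(z)-z_0|\le Ce^{s\Re f'(z_0)}$ for large $s$, uniformly for $z$ in a compact set. As $B$ is holomorphic (hence locally Lipschitz) with $B(z_0)=B_0$, this yields an integrable bound $\|B(F_s(z))-B_0\|\le C'e^{s\Re f'(z_0)}$. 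Thus the integral converges uniformly on compact subsets, so $M\in\Hol(\D,\A)$, and since $M(z)$ is an exponential we in fact have $M\in\Hol(\D,\A_*)$.

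The second step is the computation of $M(F_t(z))$. Using the semigroup identity $F_s\circ F_t=F_{s+t}$ and the substitution $\tau=s+t$,
\[
M(F_t(z))=\exp\left[\int_0^\infty\bigl(B(F_{s+t}(z))-B_0\bigr)\,ds\right]
=\exp\left[\int_t^\infty\bigl(B(F_\tau(z))-B_0\bigr)\,d\tau\right].
\]
Because $\A$ is commutative, the exponentials combine additively, and subtracting the exponents gives $\int_0^\infty(\,\cdot\,)-\int_t^\infty(\,\cdot\,)=\int_0^t\bigl(B(F_\tau(z))-B_0\bigr)\,d\tau$, so
\[
\widetilde{\Gamma}_t(z)
=\exp\left[\int_0^t\bigl(B(F_\tau(z))-B_0\bigr)\,d\tau+tB_0\right]
=\exp\left[\int_0^t B(F_\tau(z))\,d\tau\right],
\]
the $tB_0$ term cancelling the $-B_0$ in the remaining integral. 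Differentiating this exponential of a commuting family in $t$ gives $\frac{\partial}{\partial t}\widetilde{\Gamma}_t(z)=B(F_t(z))\widetilde{\Gamma}_t(z)$, and plainly $\widetilde{\Gamma}_0(z)=1_\A$. Hence $\widetilde{\Gamma}_t$ and $\Gamma_t$ solve the same problem~\eqref{cauchy}, and uniqueness finishes the argument.

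The main obstacle is the convergence of the improper integral defining $M$; everything after that is a commutative manipulation identical in spirit to the proof of Theorem~\ref{th-konig}. Particular care must be taken that the decay estimate is uniform on compact subsets, so that $M$ is genuinely holomorphic on all of $\D$ rather than merely on $\D\setminus\{z_0\}$ --- this uniformity is exactly the improvement over \cite{Konig, J-97} asserted in the statement.
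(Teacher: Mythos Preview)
Your proof is correct and follows essentially the same strategy as the paper: establish uniform-on-compacta convergence of $N(z)=\int_0^\infty\bigl(B(F_s(z))-B_0\bigr)\,ds$ via exponential decay of $F_s(z)\to z_0$, then use commutativity to reduce everything to the identity $\int_0^t B(F_s(z))\,ds = tB_0 - N(F_t(z)) + N(z)$, and finally appeal to uniqueness for the evolution problem~\eqref{cauchy}. The one noteworthy difference is that the paper obtains this identity by first justifying differentiation under the integral to compute $N'(z)=(B_0-B(z))/f(z)$ and then integrating $\frac{\partial}{\partial s}N(F_s(z))$ over $[0,t]$, whereas you reach it more directly via the substitution $\tau=s+t$; your route is shorter and avoids the extra work of bounding the derivative integral.
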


\begin{proof}
Without loss of generality we can assume that $z_0=0.$ 
Denote by $f\in\Hol(\D,\C)$ the generator of $\Ff$, so that $f(0)=0$
and $\lambda:=-f'(0)$ satisfies $\Re\lambda>0$.
Therefore, we have
\[
   |F_t(z)| \le |z| \exp\left(-\lambda t\frac{1-|z|}{1+|z|}
      \right),\quad z\in\D,\quad t\ge0
\]
(see, for example, \cite[Proposition 4.4.2]{SD}).

We already know by Theorem~\ref{th-sol-cauchy} that
$\{\Gamma_t\}_{t\ge0}$ is differentiable, hence
$ B(z):=\left.\frac{d}{dt} \Gamma_t(z)
\right|_{t=0}$ exists.  First we show that the integral
\begin{equation}\label{N}
N(z):=\int_0^\infty \left(B(F_t(z))-B_0\right)dt
\end{equation}
converges for any $z\in\D$ and defines a holomorphic function
$N\in\Hol(\D,\A_*)$.

Indeed, fix $R\in(0,1)$. There exists $L>0$ such that
$\|B(z)-B_0)\|\le L$ whenever $|z|<R$. Then by the Schwarz Lemma
\[
\|B(z)-B_0\| \le \frac{L|z|}{R}\,,\quad z\in\D_{R}.
\]

For any $z\in\B$ there is $t_1>0$ such that $|F_{t_1}(z)|<R$. Then
for all $t>0$,
\begin{eqnarray*}
\|B(F_{t_1+t}(z))-B_0\| &\le& \frac{L |F_{t_1+t}(z)|}{R} \\
&\le& \frac{L |F_{t_1}(z)|}{R} \exp\left(
-\lambda t\frac{1- |F_{t_1}(z)|}{1+|F_{t_1}(z)|} \right) \\
&=& M \exp\left(-\lambda t\frac{1-R}{1+R} \right).
\end{eqnarray*}
Therefore, the integral \eqref{N} is absolutely convergent at any
point $z\in\D$ and uniformly convergent on $\D_{R}$.

We now consider the integral
\begin{eqnarray}\label{int-der}
\int_0^\infty{\left(B(F_t(z))-B_0\right)' dt} = \int_0^\infty{
B'(F_t(z)) {F_t}'(z) dt}.
\end{eqnarray}
To estimate it, we note that by the Cauchy inequality $\|B'(z)\|
\le \frac{L}{R-R_1}$ whenever $|z|< R_1,\ R_1\in(0,R)$. We choose
$R_1$ such that such that $\Re f'(z)\le\frac {-\lambda}{2}$ for all
$z\in\D_{R_1}$.

In addition, differentiating \eqref{nS1} we get that the function
$v$ defined by $v(t):={F_t}'(z)$, is the solution of the evolution
problem
\[
\left\{
\begin{array}{l}
\displaystyle
\frac{\partial v(t)}{\partial t}=f'(F_t(z)) v(t) \vspace{2mm} \\
v(0)=1.
\end{array}%
\right.
\]
Therefore $\left|{F_t}'(z)\right| \le e^{\frac{-\lambda t}2}$ (for
instance, it follows from assertion (ii) of
Theorem~\ref{th-sol-cauchy_1}), hence the integral \eqref{int-der}
converges uniformly on $\D_{R_1}$. Therefore one can differentiate
\eqref{N} as follows:
\begin{eqnarray*}
N'(z) &=& \int_0^\infty B'(F_t(z)) {F_t}'(z) dt \\
&=& \frac1{f(z)} \int_0^\infty B'(F_t(z)) f(F_t(z)) dt \\
&=& \frac1{f(z)} \int_0^\infty\left[\frac{\partial}{\partial t}
B(F_t(z))\right]dt = \frac{B_0-B(z)}{f(z)}.
\end{eqnarray*}
Thus
\begin{equation}\label{gamma1}
B(F_s(z))=B_0-N'(F_s(z))f(F_s(z)).
\end{equation}

Note that since $\A$ is commutative and using the uniqueness of
solutions of differential equations,
\[
\Gamma_t(z)=\exp\left(\int_0^t B(F_s(z))ds \right).
\]
(because the right-hand side in the last formula satisfies
\eqref{cauchy}). Now using \eqref{gamma1}, we rewrite this as
follows:
\begin{eqnarray*}
\Gamma_t(z)=\exp\left(\int_0^t
\left(B_0 -N'(F_s(z))f(F_s(z)) \right)ds \right) \\
= \exp\left[ tB_0- N(F_t(z))+N(z)\right].
\end{eqnarray*}
This completes the proof.
\end{proof}

\subsection{The general case}
Henceforth in this section we will assume that the semigroup has
an interior fixed point $z_0\in \D$, and thus we can assume the
normalization \eqref{norm}, hence \eqref{B0} holds. As we have
seen in Example \ref{ex-not-rep} above, general non-commutative
semicocycles are not always linearizable. We will now analyze the
problem of finding a linearizing mapping $M$, and develop a
procedure to construct it using a power series expansion, which
will also reveal the nature of the obstructions to
linearizability.

We write \eqref{rep} in the form
\begin{equation}\label{repp}
M(F_t(z))\Gamma_t(z)=e^{tB_0}M(z),
\end{equation}
and consider it as a linear functional equation, which is to be
solved for $M\in \Hol(\D,\A_*)$.

As a first step, it will be useful to simplify \eqref{repp} as
follows. We denote by $h\in\Hol(\D,\C)$ the K{\oe}nigs function
associated to the semigroup, so that
$$h(F_t(z))=e^{-\lambda t}h(z),$$
where $\lambda=-f'(0)$, with $\Re(\lambda)>0$, and $h(z_0)=0$,
$h'(z_0)=1$. Recall that $\Omega=h(\D)$ is a spirallike domain.
Set $z=h^{-1}(w),\ w\in\Omega,$ in \eqref{repp} to obtain
\[
M(h^{-1}(e^{-\lambda t} w ))\Gamma_t(h^{-1}(w))=
e^{tB_0}M(h^{-1}(w)),
\]
or, setting
 \begin{eqnarray}\label{rr1}
 &&\gamma_t(w)=\Gamma_t(h^{-1}(w)),\qquad m(w)=M(h^{-1}(w)),
 \end{eqnarray}
 \begin{equation}\label{rr2}
 m(e^{-\lambda t }w)\gamma_t(w)=e^{tB_0}m(w).
 \end{equation}
Thus, finding a representation of $\Gamma_t$ in the form
\eqref{rep} is equivalent to finding $m\in \Hol(\Omega,\A_*)$ such
that \eqref{rr2} holds. We shall therefore treat the problem of
solving \eqref{rr2} for $m\in \Hol(\Omega,\A_*)$.

Our strategy is to solve first \eqref{rr2} for $m$ {\it{locally}}
in a neighborhood of $w=0$, and then note that any such solution
automatically extends to all of~$\Omega$.

\begin{lemma}\label{local}
    Let $r_0$ be such that $\D_{r_0}\subset \Omega=h(\D)$.
    A holomorphic mapping $m:\D_{r_0}\rightarrow \A$, which satisfies \eqref{rr2} for all $w\in\D_{r_0}$
    and $t\geq 0$,
    can be extended to a holomorphic mapping satisfying \eqref{rr2} for all $w\in \Omega$.
\end{lemma}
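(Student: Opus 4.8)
The plan is to exploit that the spiral flow $w\mapsto e^{-\lambda t}w$ contracts every point of $\Omega$ into the disk $\D_{r_0}$, and to use \eqref{rr2} itself as the rule defining the extension. Since $\Omega=h(\D)$ and $h(F_t(z))=e^{-\lambda t}h(z)$, for every $w=h(z)\in\Omega$ and $t\ge0$ we have $e^{-\lambda t}w=h(F_t(z))\in\Omega$; moreover $\Re\lambda>0$ gives $|e^{-\lambda t}w|=|w|e^{-t\Re\lambda}\to0$, so there is $T\ge0$ with $e^{-\lambda T}w\in\D_{r_0}$. I would first record that the conjugated family $\gamma_t(w)=\Gamma_t(h^{-1}(w))$ is holomorphic in $w$ and inherits the cocycle identity over the linear semigroup,
\[
\gamma_t(e^{-\lambda s}w)\,\gamma_s(w)=\gamma_{t+s}(w),\qquad s,t\ge0,
\]
which follows from the chain rule for $\{\Gamma_t\}$ together with $F_s(h^{-1}(w))=h^{-1}(e^{-\lambda s}w)$.

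Solving \eqref{rr2} for the unknown value gives $m(w)=e^{-tB_0}m(e^{-\lambda t}w)\gamma_t(w)$, which suggests defining, for $w\in\Omega$,
\[
m(w):=e^{-TB_0}\,m(e^{-\lambda T}w)\,\gamma_T(w),
\]
where $T=T(w)\ge0$ is any value with $e^{-\lambda T}w\in\D_{r_0}$ and on the right $m$ denotes the given local solution. \emph{The main obstacle is to prove this is well defined}, that is, independent of the admissible $T$. For admissible $T_1<T_2$ I would apply the local equation \eqref{rr2} at the point $e^{-\lambda T_1}w\in\D_{r_0}$ with parameter $T_2-T_1$ (legitimate since $e^{-\lambda(T_2-T_1)}(e^{-\lambda T_1}w)=e^{-\lambda T_2}w\in\D_{r_0}$) to express $m(e^{-\lambda T_1}w)$ through $m(e^{-\lambda T_2}w)$; upon substitution the factors recombine via $e^{-T_1B_0}e^{-(T_2-T_1)B_0}=e^{-T_2B_0}$ and the cocycle identity collapses $\gamma_{T_2-T_1}(e^{-\lambda T_1}w)\gamma_{T_1}(w)=\gamma_{T_2}(w)$, reproducing exactly the $T_2$-expression. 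This is the one place where the local validity of \eqref{rr2} and the cocycle property are used jointly.

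With well-definedness established, the rest is routine. Holomorphy holds because near any $w_0\in\Omega$ a single $T$ is admissible for all nearby $w$ --- the condition $e^{-\lambda T}w\in\D_{r_0}$ is open --- so locally $m$ is a product of the constant $e^{-TB_0}$, the holomorphic map $w\mapsto m(e^{-\lambda T}w)$, and $\gamma_T$. Agreement with the original $m$ on $\D_{r_0}$ follows by taking $T=0$, for which $\gamma_0\equiv1_\A$ and the exponential factor equals $1_\A$. Finally, to check that the extension satisfies \eqref{rr2} throughout $\Omega$, I would fix $w\in\Omega$ and $s\ge0$, choose $T$ with $e^{-\lambda T}w\in\D_{r_0}$ (so also $e^{-\lambda(T+s)}w\in\D_{r_0}$), expand $m(e^{-\lambda s}w)$ and $m(w)$ from the definition, and use the local equation at $e^{-\lambda T}w$ together with $\gamma_T(e^{-\lambda s}w)\gamma_s(w)=\gamma_{T+s}(w)=\gamma_s(e^{-\lambda T}w)\gamma_T(w)$; since powers of $B_0$ commute, the bookkeeping reduces $m(e^{-\lambda s}w)\gamma_s(w)$ to $e^{sB_0}m(w)$.
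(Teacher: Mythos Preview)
Your proposal is correct and uses the same core idea as the paper: extend $m$ by the formula $m(w)=e^{-TB_0}m(e^{-\lambda T}w)\gamma_T(w)$, exploiting that the spiral flow eventually carries every point of $\Omega$ into $\D_{r_0}$. The paper's execution is somewhat more streamlined: rather than letting $T$ depend on $w$ and then checking well-definedness via the cocycle identity, it fixes $r>r_0$, chooses a \emph{single} $t_0$ with $|e^{-\lambda t_0}|r<r_0$, defines $\tilde m$ on all of $\Omega\cap\D_r$ by the same formula, and then uses that $\tilde m$ is holomorphic and agrees with $m$ on $\D_{r_0}$ (together with connectedness of $\Omega\cap\D_r$, which follows from spirallikeness) to conclude by the identity theorem that both the consistency of the extensions and the validity of \eqref{rr2} propagate automatically. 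Your direct verification via the cocycle relation is equally valid and arguably more explicit; the paper's route trades that bookkeeping for an appeal to analytic continuation.
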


\begin{proof}
    Fix $r>r_0$. Since the domain $\Omega$ is spirallike, the set  $\Omega\cap \D_r$ is connected.
    To extend $m$ to $\Omega\cap \D_r$, choose $t_0>0$ so that $|e^{-\lambda t
    _0}|r<r_0$,
    and define
    \[
    \tilde{m}(w)=e^{-t_0B_0}m(e^{-\lambda t _0}w)\gamma_{t_0}(w),\quad w\in \Omega\cap \D_r.
    \]
    In view of \eqref{rr2}, $\tilde{m}(w)=m(w)$ for $w\in\D_{r_0}$, so that $\tilde{m}$ extends $m$ holomorphically to
    $\Omega\cap\D_r$. By holomorphicity, the equation \eqref{rr2} will continue to hold in the larger domain.
    Since $r>0$ is arbitrary,
    we can extend $m$ to the whole of $\Omega$.
\end{proof}

We now begin the process of solving \eqref{rr2} locally around
$w=0$. Setting
$$b(w)=B(h^{-1}(w)), $$
we note that
\begin{eqnarray}\label{de1}
\frac{d}{dt}\gamma_t(w)&=&\frac{d}{dt}\Gamma_t(h^{-1}(w)) =
B\left(F_t(h^{-1}(w))\right) \Gamma_t\left(h^{-1}(w)\right) \nonumber\\
&=&  B\left(h^{-1}(e^{-\lambda t}w)\right) \gamma_t(w) = b(e^{-\lambda
t}w)\gamma_t(w).
\end{eqnarray}
Differentiating \eqref{rr2} with respect to $t$
and then putting $t=0$, we have
\begin{equation}\label{rr4}
\lambda w\,m'(w)=m(w)b(w)-B_0m(w).
\end{equation}
We claim that in fact equation \eqref{rr4} is {\it{equivalent}} to
\eqref{rr1}:
\begin{lemma}
    A mapping $m\in\Hol(\D,\A_*)$ is a solution of \eqref{rr1} if and only if it is a solution to \eqref{rr4}.
\end{lemma}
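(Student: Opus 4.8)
The plan is to prove the two implications separately, converting the functional equation \eqref{rr2} into the differential equation \eqref{rr4} by differentiating in $t$ and setting $t=0$, and recovering \eqref{rr2} from \eqref{rr4} by a uniqueness argument. Throughout, the functional equation at issue is \eqref{rr2}, namely $m(e^{-\lambda t}w)\gamma_t(w)=e^{tB_0}m(w)$ for all $t\ge0$, posed on the spirallike domain $\Omega=h(\D)$. The forward direction is precisely the computation already displayed just before the lemma: if $m$ satisfies \eqref{rr2} for every $t\ge0$, then both factors on the left are differentiable in $t$ (the first by the chain rule applied to $w\mapsto m(w)$, the second by \eqref{de1}), so differentiating \eqref{rr2} and evaluating at $t=0$ — using $\gamma_0(w)=1_\A$ together with $\left.\frac{d}{dt}\gamma_t(w)\right|_{t=0}=b(w)$ from \eqref{de1} — turns the left-hand side into $-\lambda w\,m'(w)+m(w)b(w)$ and the right-hand side into $B_0 m(w)$, which is exactly \eqref{rr4}.

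For the reverse direction I would assume \eqref{rr4} and recover \eqref{rr2} by comparing two solutions of one linear evolution problem. Fix $w\in\Omega$ and set $\Phi_t:=m(e^{-\lambda t}w)\gamma_t(w)$. Differentiating with the product and chain rules and using \eqref{de1} on the factor $\gamma_t$, and abbreviating $u:=e^{-\lambda t}w$, one obtains
\[
\frac{d}{dt}\Phi_t=\bigl[-\lambda u\,m'(u)+m(u)b(u)\bigr]\gamma_t(w).
\]
Applying \eqref{rr4} at the point $u$ gives $-\lambda u\,m'(u)+m(u)b(u)=B_0 m(u)$, whence $\frac{d}{dt}\Phi_t=B_0\Phi_t$, with initial value $\Phi_0=m(w)$ (since $\gamma_0(w)=1_\A$). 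As $t\mapsto e^{tB_0}m(w)$ solves the same linear evolution problem with the same initial value, the uniqueness theorem for such equations (the uniqueness part of Theorem~\ref{th-sol-cauchy_1}(ii), applied to the difference) forces $\Phi_t=e^{tB_0}m(w)$, which is exactly \eqref{rr2}.

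The only delicate point — and the main thing to get right — is that the reverse implication invokes \eqref{rr4} not merely at the base point $w$ but at every point $u=e^{-\lambda t}w$ of the spiral issuing from $w$. This is legitimate precisely because $\Omega=h(\D)$ is spirallike: from $h(F_t(z))=e^{-\lambda t}h(z)$ one reads off that $w\in\Omega$ implies $e^{-\lambda t}w\in\Omega$ for every $t\ge0$, so each such $u$ lies in the domain on which \eqref{rr4} is assumed to hold. Once this invariance is in hand, both $\Phi_t$ and $e^{tB_0}m(w)$ are well-defined $\A$-valued functions of $t$ on $[0,\infty)$, and the uniqueness argument closes the proof.
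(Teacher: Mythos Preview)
Your proof is correct and follows the same overall strategy as the paper: the forward direction is the differentiation displayed just before the lemma, and the reverse direction is a uniqueness argument for a linear evolution problem, invoking \eqref{rr4} along the spiral $u=e^{-\lambda t}w$ (which stays in $\Omega$ by spirallikeness).

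The only difference is in which quantity you track. The paper defines $\tilde{\gamma}_t(w)=m(e^{-\lambda t}w)^{-1}e^{tB_0}m(w)$ and shows it satisfies the \emph{non-autonomous} equation $\frac{d}{dt}\tilde{\gamma}_t=b(e^{-\lambda t}w)\tilde{\gamma}_t$, the same one that $\gamma_t$ satisfies by \eqref{de1}. You instead take $\Phi_t=m(e^{-\lambda t}w)\gamma_t(w)$, the left-hand side of \eqref{rr2}, and show it satisfies the \emph{constant-coefficient} equation $\frac{d}{dt}\Phi_t=B_0\Phi_t$, matching it to the right-hand side $e^{tB_0}m(w)$. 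Your variant is marginally cleaner: it avoids inverting $m$ along the way and reduces to an autonomous problem, while the paper's version makes the connection with the evolution equation \eqref{de1} for $\gamma_t$ more visible. The two are formally equivalent rearrangements of the same identity.
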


\begin{proof}
    We have already shown that \eqref{rr1} implies \eqref{rr4}. Now assume $m\in\Hol(\Omega,\A_*)$
    satisfies \eqref{rr4}. Define $\tilde{\gamma}_t$ by
    \begin{equation}
    \label{d1}\tilde{\gamma}_t(w)=m(e^{-\lambda t }w)^{-1}e^{tB_0}m(w).
    \end{equation}
    Multiplying both sides by $m(e^{-\lambda t }w)$, and differentiating with respect to $t$, we have
\begin{equation}\label{p1}
-\lambda e^{-\lambda t}w\,m'(e^{-\lambda t}w)\tilde{\gamma}_t(w)+ m(e^{-\lambda t
}w)\frac{d}{dt}\tilde{\gamma}_t(w)=e^{tB_0}B_0 m(w).
\end{equation}
Substituting $e^{-\lambda t}w$ in place of $w$ in \eqref{rr4} we get
   \begin{equation}\label{aux}\lambda e^{-\lambda t}w\,m'(e^{-\lambda t}w)=m(e^{-\lambda t}w)b(e^{-\lambda t}w)-B_0m(e^{-\lambda
   t}w).\end{equation}
   From \eqref{d1}--\eqref{aux}, we have
            $$-m(e^{-\lambda t}w)b(e^{-\lambda t}w)\tilde{\gamma}_t(w)+ m(e^{-\lambda t }w)\frac{d}{dt}\tilde{\gamma}_t(w)=0,$$
    that is,
            $$\frac{d}{dt}\tilde{\gamma}_t(w)=b(e^{-\lambda t}w)\tilde{\gamma}_t(w).$$
    We therefore see that the mapping $t\mapsto\tilde{\gamma}_t(w)$ satisfies the same differential equations as
    does $t\mapsto\gamma_t(w)$ (see \eqref{de1}), and since $\tilde{\gamma}_0(w)=\gamma_0(w)=1_{\A}$,
    the uniqueness of solutions for initial-value problems implies that $\tilde{\gamma}_t=\gamma_t$,
    hence by \eqref{d1} we have the representation \eqref{rr1}.
\end{proof}

In the following we thus investigate equation \eqref{rr4},
obtaining necessary conditions and sufficient conditions for its
solvability. Note that, in view of Lemma \ref{local}, it suffices
to solve (\ref{rr4}) in a neighborhood of $w=0$. Since \eqref{rr4}
is a first-order differential equation for $m(w)$ with a
singularity at $w=0$, we cannot appeal to the existence theorem
for differential equations to obtain local solvability (and indeed
the equation is not always solvable).

To proceed, we expand $b(w)$ and $m(w)$ in power series: $\displaystyle
b(w)=\sum_{k=0}^\infty b_k w^k$ and
\begin{equation}\label{MS}
m(w)=\sum_{k=0}^\infty m_k w^k,
\end{equation}
where all the coefficients are elements of $\A$. Equation
\eqref{rr4} then becomes
\[
\lambda\sum_{k=0}^\infty km_k w^k=\sum_{k=0}^\infty w^k\sum_{l=0}^k
m_lb_{k-l} - \sum_{k=0}^\infty B_0 m_k w^k.
\]
Equating corresponding coefficients, we get
$$k\lambda m_k=\sum_{l=0}^k m_lb_{k-l}- B_0 m_k$$
or, noting that $b_0=B_0$,
\begin{equation}\label{rec}
k\lambda  m_k-\left(m_kB_0-B_0 m_k\right)=\sum_{l=0}^{k-1}
m_lb_{k-l},\quad k\geq 0.
\end{equation}
We therefore have a sequence of recursive equations, with the
equation for $k=0$ holding trivially (since $m_0=1_{\A}$), and the
equation for $k\geq 1$, if it is uniquely solvable, defining
$m_k$. Thus the solvability of the sequence of equations
\eqref{rec} is a necessary condition for the existence of a
mapping $m$ satisfying \eqref{rr4}. Conversely, if the sequence of
equations \eqref{rec} is solvable, and if the series \eqref{MS}
can be shown to converge in a neighborhood of $w=0$, then it
defines a solution of \eqref{rr4} in a neighborhood of $0$, and,
by Lemma \ref{local}, on the whole of $\Omega$. We have therefore
shown:
\begin{propo}
The semicocycle generated by $B\in\Hol(\D,\A)$ is linearizable if
and only if the sequence of equations \eqref{rec} with
$$B_0+\sum_{k=1}^\infty b_k w^k=B(h^{-1}(w)),$$
is solvable.
\end{propo}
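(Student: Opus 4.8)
My plan is to prove the two implications separately; the ``only if'' direction is immediate, while the ``if'' direction rests on a convergence estimate that is the real content of the argument. For necessity, suppose the semicocycle is linearizable. By the preceding lemma there is $m\in\Hol(\Omega,\A_*)$ solving \eqref{rr4}, and $m(0)=M(z_0)=1_\A$ by the normalization \eqref{norm}. Since $m$ is holomorphic at $w=0$, its Taylor coefficients $m_k$ are well-defined, and substituting the expansion \eqref{MS} into \eqref{rr4} and matching coefficients of $w^k$ shows that they satisfy \eqref{rec}. Hence the sequence \eqref{rec} is solvable.

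For sufficiency I would take any sequence $\{m_k\}_{k\ge0}$ with $m_0=1_\A$ satisfying \eqref{rec} and show that the formal series \eqref{MS} has positive radius of convergence; it then defines a local solution of \eqref{rr4}, which Lemma \ref{local} propagates to all of $\Omega$. The organizing device is the family of linear operators $T_k\colon\A\to\A$ given by $T_k(X)=k\lambda X-(XB_0-B_0X)$, in terms of which \eqref{rec} reads $T_k(m_k)=\sum_{l=0}^{k-1}m_l b_{k-l}$, the right-hand side depending only on the previously determined coefficients. Writing $R(X)=XB_0-B_0X$, a fixed bounded operator with $\|R\|\le 2\|B_0\|$, we have $T_k=k\lambda I-R$; since $|\lambda|>0$, once $k|\lambda|>\|R\|$ the Neumann series gives invertibility together with $\|T_k^{-1}\|\le(k|\lambda|-\|R\|)^{-1}$, so that $\|T_k^{-1}\|\le C_0/k$ for all $k\ge k_0$ and some constant $C_0>0$. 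For the finitely many smaller $k$ at which $T_k$ may be singular, one simply retains the given values $m_k$, whose norms are fixed finite numbers.

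The crux is then a geometric majorant bound $\|m_k\|\le A\sigma^k$, proved by induction. Because $b=B\circ h^{-1}$ is holomorphic near $0$, its coefficients satisfy $\|b_j\|\le C r^{-j}$ for some $C,r>0$. For $k\ge k_0$, the recursion together with the decay of $\|T_k^{-1}\|$ and the inductive hypothesis gives
\[
\|m_k\|\le\frac{C_0}{k}\sum_{l=0}^{k-1}\|m_l\|\,\|b_{k-l}\|\le\frac{C_0 C}{k}\,r^{-k}\sum_{l=0}^{k-1}(\sigma r)^l,
\]
and summing the geometric progression (with $\sigma r>1$) closes the induction provided $\sigma$ is chosen of the form $\sigma=r^{-1}\bigl(1+C_0C/k_0\bigr)$ and $A$ is enlarged to dominate the finitely many initial coefficients. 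I expect this estimate to be the main obstacle: the convolution sum by itself would force superexponential growth of the $m_k$, and it is precisely the factor $1/k$ arising from $\|T_k^{-1}\|=O(1/k)$ (ultimately a consequence of $\Re\lambda>0$) that tames it and yields a positive radius of convergence.

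Finally I would assemble the pieces. The series \eqref{MS} now defines $m\in\Hol(\D_{r_0},\A)$ solving \eqref{rr4} on some disk $\D_{r_0}$; after shrinking $r_0$ so that $\D_{r_0}\subset\Omega$ and using $m(0)=1_\A$ together with the openness of $\A_*$, we obtain $m\in\Hol(\D_{r_0},\A_*)$. The preceding lemma converts this into a local solution of \eqref{rr2}, and Lemma \ref{local} extends it to all of $\Omega$; the extension remains in $\A_*$ since it is a product of $e^{-t_0B_0}$, invertible values of $m$, and $\gamma_{t_0}(w)=\Gamma_{t_0}(h^{-1}(w))$, which is invertible by Corollary \ref{cor-inver}. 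Transporting back through \eqref{rr1} produces a linearizing mapping $M\in\Hol(\D,\A_*)$, so the semicocycle is linearizable, completing the equivalence.
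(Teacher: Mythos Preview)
Your argument is correct and follows the same overall strategy as the paper: reduce linearizability to the functional equation \eqref{rr4}, expand in power series to obtain the recursion \eqref{rec}, and close the loop via Lemma~\ref{local}. There is, however, a genuine difference in what you establish at the convergence step. The paper's text preceding the proposition only asserts that solvability of \eqref{rec} \emph{together with} convergence of \eqref{MS} yields a local solution; the explicit convergence estimate the paper gives comes afterwards, in the course of proving Theorem~\ref{th1}, and there it is carried out under the extra hypothesis \eqref{nr}, which makes $k\lambda\cdot 1_\A-\ad_{B_0}$ invertible for \emph{every} $k$ and hence gives a uniform bound $\|T_k^{-1}\|\le C_1$. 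Your proof instead observes that $T_k$ is automatically invertible for all $k\ge k_0$ (Neumann series), treats the finitely many $m_1,\dots,m_{k_{0}-1}$ as given data furnished by the solvability hypothesis, and runs the geometric majorant induction only from $k_0$ onward. This is a real improvement: it proves the proposition exactly as stated, without \eqref{nr}, whereas the paper's surrounding text leaves that point implicit. Your care in checking that the extended $m$ lands in $\A_*$ (via Corollary~\ref{cor-inver}) is also a point the paper does not spell out.

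One small remark: your aside that ``the convolution sum by itself would force superexponential growth'' and that the factor $1/k$ in $\|T_k^{-1}\|$ is what ``tames it'' is not quite right. A mere uniform bound $\|T_k^{-1}\|\le C_0'$ for $k\ge k_0$ already suffices: the same induction with $\sigma=r^{-1}(1+C_0'C)$ closes (this is exactly the estimate the paper uses, yielding $\|m_k\|\le((C_3+1)/r)^k$). The $1/k$ decay is a pleasant bonus but not the mechanism that produces a positive radius of convergence. This does not affect the validity of your proof.
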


To study the solvability of \eqref{rec}, we define the bounded
linear operator $\ad_{B_0}:\A\rightarrow \A$ by
$$\ad_{B_0}(a)=aB_0-B_0a .$$
Then \eqref{rec} can be written as
\begin{equation}\label{rec1}
k\lambda m_k-\ad_{B_0}(m_k)=\sum_{l=0}^{k-1} m_lb_{k-l}.
\end{equation}
We therefore conclude that, for given $k$, a sufficient condition
for \eqref{rec1} to be solvable is that $k\lambda$ is not in the
spectrum of $\ad_{B_0}$. If we assume the condition
\begin{equation}
\label{nr} \sigma(\ad_{B_0})\cap \left(\lambda\N\right) = \emptyset,
\end{equation}
then all the equations \eqref{rec1} will be solvable:
\begin{equation}\label{rec2}
m_k=(k\lambda\cdot 1_\A -\ad_{B_0})^{-1}\sum_{l=0}^{k-1}
m_lb_{k-l},\quad k\in\N.
\end{equation}
It is known that the spectrum of $\ad_{B_0}$ can be related to the
spectrum of~$B_0$.
\begin{lemma}\label{ei} We have
    $$
    \sigma(\ad_{B_0})\subseteq \sigma(B_0)-\sigma(B_0):=
    \left\{\lambda_1-\lambda_2:\quad \lambda_1,\lambda_2\in \sigma(B_0)\right\}.
    $$
    If $\A$ is finite-dimensional, then the inclusion sign can be replaced by the equality sign.
\end{lemma}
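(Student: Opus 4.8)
The plan is to factor $\ad_{B_0}$ through left and right multiplication and then bound its spectrum using theirs. Introduce the bounded operators $L_{B_0},R_{B_0}\in\LL(\A)$ given by $L_{B_0}(a)=B_0a$ and $R_{B_0}(a)=aB_0$, so that $\ad_{B_0}=R_{B_0}-L_{B_0}$. A direct check gives $L_{B_0}R_{B_0}(a)=B_0aB_0=R_{B_0}L_{B_0}(a)$, so $L_{B_0}$ and $R_{B_0}$ commute. The first step is to identify their spectra: I claim $\sigma(L_{B_0})=\sigma(R_{B_0})=\sigma(B_0)$. Since $a\mapsto L_a$ is a unital algebra homomorphism and $L_{B_0}-\mu\,\Id=L_{B_0-\mu 1_\A}$, invertibility of $B_0-\mu 1_\A$ in $\A$ yields invertibility of $L_{B_0}-\mu\,\Id$ via $L_{(B_0-\mu 1_\A)^{-1}}$. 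Conversely, if $L_{B_0-\mu 1_\A}$ is invertible on $\A$ then it is surjective, producing a right inverse of $B_0-\mu 1_\A$, and injective, which forces that right inverse to be two-sided; hence $B_0-\mu 1_\A$ is invertible in $\A$. The identical argument (now for the anti-homomorphism $a\mapsto R_a$) handles $R_{B_0}$.

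For the inclusion I would use the standard spectral estimate $\sigma(S-T)\subseteq\sigma(S)-\sigma(T)$ for commuting $S,T\in\LL(\A)$. To justify it, enclose $L_{B_0}$ and $R_{B_0}$ in a maximal commutative subalgebra $\mathcal{B}\subseteq\LL(\A)$ (possible by Zorn's lemma, since they commute). Such a subalgebra is spectrum-preserving: if $S\in\mathcal{B}$ and $S-\mu\,\Id$ is invertible in $\LL(\A)$, then $(S-\mu\,\Id)^{-1}$ commutes with everything commuting with $S$, hence with all of $\mathcal{B}$, so by maximality it lies in $\mathcal{B}$; thus $\sigma_{\mathcal{B}}(S)=\sigma_{\LL(\A)}(S)$. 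Gelfand theory applied to the commutative algebra $\mathcal{B}$ then gives that each point of $\sigma(\ad_{B_0})$ equals $\chi(\ad_{B_0})=\chi(R_{B_0})-\chi(L_{B_0})$ for some character $\chi$ of $\mathcal{B}$ (note $\ad_{B_0}=R_{B_0}-L_{B_0}\in\mathcal{B}$), with $\chi(R_{B_0})\in\sigma(R_{B_0})=\sigma(B_0)$ and $\chi(L_{B_0})\in\sigma(L_{B_0})=\sigma(B_0)$. This yields $\sigma(\ad_{B_0})\subseteq\sigma(B_0)-\sigma(B_0)$.

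For the reverse inclusion in the finite-dimensional case, where $\A=M_n(\C)$ and all spectra are eigenvalue sets, I would exhibit explicit eigenvectors. Fix $\lambda,\mu\in\sigma(B_0)$; choose a nonzero right eigenvector $p$ with $B_0p=\mu p$ and a nonzero left eigenvector $q^{*}$ with $q^{*}B_0=\lambda q^{*}$ (the latter exists because $B_0$ and $B_0^{\mathsf{T}}$ have the same eigenvalues). Then the nonzero rank-one matrix $X=pq^{*}$ satisfies $\ad_{B_0}(X)=pq^{*}B_0-B_0pq^{*}=\lambda\,pq^{*}-\mu\,pq^{*}=(\lambda-\mu)X$, so $\lambda-\mu\in\sigma(\ad_{B_0})$. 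As $\lambda,\mu$ range over $\sigma(B_0)$ this shows $\sigma(B_0)-\sigma(B_0)\subseteq\sigma(\ad_{B_0})$, and combined with the inclusion already proved we get equality.

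I expect the inclusion to be the delicate part: spectral mapping is false for non-commuting operators, so the argument depends essentially on the commutativity of $L_{B_0}$ and $R_{B_0}$ together with the fact that a maximal commutative subalgebra computes spectra faithfully. By contrast, the finite-dimensional equality is linear algebra, and its only real content is that the full matrix algebra $M_n(\C)$ contains enough rank-one elements $pq^{*}$ to realize every difference $\lambda-\mu$ as an eigenvalue of $\ad_{B_0}$.
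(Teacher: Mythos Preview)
Your proof is correct and self-contained, whereas the paper does not give a proof at all: it simply refers the reader to \cite[Theorem~11.23]{R-73} for the inclusion and to \cite[Exercise~5.15.1]{B-G} for the finite-dimensional equality. The argument you give for the inclusion --- factoring $\ad_{B_0}=R_{B_0}-L_{B_0}$ into commuting multiplication operators, identifying their spectra with $\sigma(B_0)$, and applying Gelfand theory in a maximal commutative subalgebra of $\LL(\A)$ --- is essentially the standard route (and in fact is close to what Rudin does). Your rank-one eigenvector construction $X=pq^{*}$ for the reverse inclusion in $M_n(\C)$ is likewise the canonical linear-algebra proof.

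One point worth flagging: you prove the equality only for $\A=M_n(\C)$, while the paper's statement says ``$\A$ finite-dimensional''. Your restriction is in fact the right one. The equality can fail for a general finite-dimensional unital algebra: for instance, if $\A=\C^2$ (commutative) and $B_0=(1,2)$, then $\ad_{B_0}=0$ has spectrum $\{0\}$, whereas $\sigma(B_0)-\sigma(B_0)=\{-1,0,1\}$. So the paper's hypothesis should really be read as $\A=M_n(\C)$ (or $\A=\LL(X)$ for a finite-dimensional $X$), which is the setting of the cited exercise and the only case the paper actually uses.
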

For proof see \cite[Theorem 11.23]{R-73}, and for the
finite-dimensional case \cite[Exercise 5.15.1]{B-G}.

In view of this lemma, we can see that \eqref{nr} follows from the
condition
\begin{equation}\label{nr1}
\left(\sigma(B_0)-\sigma(B_0)\right)\cap \left(\lambda\N\right) =
\emptyset,
\end{equation}
and in the case in which $\A$ is finite-dimensional the two
conditions \eqref{nr} and \eqref{nr1} are equivalent.

We now show that if \eqref{nr} holds then not only are the
equations \eqref{rec1} solvable, but the series \eqref{MS}
corresponding to the solutions $\{m_k\}_{k=0}^\infty$ converges in
a neighborhood of $w=0$, and thus by Lemma~\ref{local} defines a
solution of \eqref{rr4}.

We note that the condition $\lambda k\not\in \sigma(\ad_{B_0})$
obviously holds for all $k>|\lambda|^{-1}\|\ad_{B_0}\|_{L(\A)}$. This
observation implies that, under (\ref{nr1}), there exists a
uniform bound
\[
\left\| (k\lambda\cdot 1_\A-\ad_{B_0} )^{-1}\right\| \leq C_1,\quad
k\in\N.
\]
Therefore
\begin{equation}\label{bound}
\|m_k\| \leq C_1 \sum_{l=0}^{k-1} \|m_l\|\cdot \| b_{k-l} \| ,
\quad k\in\N.
\end{equation}
Since $b$ is holomorphic around zero, we can choose $r,C_2>0$ so
that we have a bound
  $\displaystyle\|b_k\|\leq C_2 \cdot \frac{1}{r^k} $ for all $k\in \N$.
Setting $C_3=C_1C_2$, we have
  $$\|m_k\| \leq \frac{C_3}{r^k}\sum_{l=0}^{k-1} r^l \|m_l\|. $$
This can be used to prove, by induction, that
\begin{equation}\label{bm}
\|m_k\|\leq \left(\frac{C_3+1}{r}\right)^k.
\end{equation}
Indeed, since $m_0=1_{\A}$, \eqref{bm} holds for $k=0$, and,
assuming \eqref{bm} for $k=1,2,\ldots, n$, we have
  \begin{eqnarray*}
\|m_{n+1 }\| &\le& \frac{C_3}{r^{n+1}}\sum_{l=0}^n r^l \|m_l\| \le
\frac{C_3}{r^{n+1}}\sum_{l=0}^n (C_3+1)^l \\
&=& \frac{C_3}{r^{n+1}}\cdot \frac{(C_3+1)^{n+1}-1}{(C_3+1)-1} \le
\left(\frac{C_3+1}{r}\right)^{n+1} .
  \end{eqnarray*}
Thus the series \eqref{MS} converges at least when $|w|<\frac
r{C_3+1}$. In view of Lemma~\ref{local}, we have proven the main
result of this section.

\begin{theorem}\label{th1}
  Let a semigroup $\Ff$ be generated by $f\in\Hol(\D,\C)$ such that
  $f(z_0)=0$ for some $z_0\in\D$ and $\lambda:=-f'(z_0)$ satisfy $\Re\lambda>0$.
  Let $\{\Gamma_t\}_{t\geq 0}$ be a semicocycle over $\Ff$. If $B_0:= \left.\frac{d}{dt}\Gamma_t(z)\right|_{t=0}$ satisfies  condition
  \eqref{nr}, that is,
     $$\sigma(\ad_{B_0})\cap \left(\lambda\N\right) = \emptyset,$$
  then the semicocycle $\{\Gamma_t\}_{t\ge0}$ generated by $B$, is linearizable, that is, it can
  be represented in the form
  \[
\Gamma_t(z)=M(F_t(z))^{-1}e^{tB_0}M(z)
  \]
  with some $M\in\Hol(\D,\A_*)$.
\end{theorem}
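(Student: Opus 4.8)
The plan is to follow exactly the roadmap that the surrounding text has already laid out, assembling the pieces that precede the statement into a single argument. By Theorem~\ref{th-sol-cauchy}, the semicocycle $\{\Gamma_t\}_{t\ge0}$ is generated by a holomorphic $B\in\Hol(\D,\A)$, and by the normalization discussion we may assume $z_0$ is an interior fixed point with $B(z_0)=B_0$. The key reduction is the passage from the original functional equation \eqref{rep} to the conjugated equation \eqref{rr2} via the K{\oe}nigs function $h$, and then — by the \emph{equivalence} lemma — to the singular differential equation \eqref{rr4} for $m\in\Hol(\Omega,\A_*)$. Thus the whole problem is reduced to producing a holomorphic solution $m$ of \eqref{rr4} in a neighborhood of $w=0$; Lemma~\ref{local} then extends it automatically to all of $\Omega$, and unwinding the substitutions \eqref{rr1} yields the desired $M\in\Hol(\D,\A_*)$.

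The core of the proof is the local construction. First I would expand $b(w)=\sum_k b_kw^k$ and seek $m(w)=\sum_k m_kw^k$, which converts \eqref{rr4} into the recursion \eqref{rec1}, namely $k\lambda m_k-\ad_{B_0}(m_k)=\sum_{l=0}^{k-1}m_lb_{k-l}$. The hypothesis \eqref{nr}, $\sigma(\ad_{B_0})\cap(\lambda\N)=\emptyset$, is precisely what guarantees that the operator $k\lambda\cdot 1_\A-\ad_{B_0}$ is invertible for every $k\in\N$, so each $m_k$ is uniquely determined by \eqref{rec2} starting from $m_0=1_\A$. This solves the recursion at the formal level and defines a candidate power series.

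The main obstacle — and the only genuinely analytic step — is convergence of the resulting series \eqref{MS} near $w=0$; mere formal solvability is not enough, since \eqref{rr4} is singular at the origin and offers no off-the-shelf existence theorem. Here the decisive observation is that $\lambda k\notin\sigma(\ad_{B_0})$ holds automatically for all large $k$ (those with $k>|\lambda|^{-1}\|\ad_{B_0}\|_{L(\A)}$), which upgrades the pointwise invertibility into a \emph{uniform} bound $\|(k\lambda\cdot 1_\A-\ad_{B_0})^{-1}\|\le C_1$ independent of $k$. Feeding this into \eqref{rec2} gives the estimate \eqref{bound}, and combining it with the holomorphy bound $\|b_k\|\le C_2 r^{-k}$ produces the geometric recursion $\|m_k\|\le (C_3/r^k)\sum_{l<k}r^l\|m_l\|$ with $C_3=C_1C_2$.

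A clean induction then yields $\|m_k\|\le\bigl((C_3+1)/r\bigr)^k$: the base case is immediate from $m_0=1_\A$, and the inductive step uses the finite geometric sum $\sum_{l=0}^n(C_3+1)^l=\frac{(C_3+1)^{n+1}-1}{C_3}$ together with the crude bound $C_3\cdot\frac{(C_3+1)^{n+1}-1}{C_3}\le(C_3+1)^{n+1}$. This shows the series converges for $|w|<r/(C_3+1)$, so $m$ is holomorphic in a disk around $0$ and, being a power series with $m_0=1_\A$ invertible, takes values in $\A_*$ near $0$. By construction it satisfies \eqref{rr4}, hence by the equivalence lemma it satisfies \eqref{rr2} locally; Lemma~\ref{local} extends it to a solution on all of $\Omega$, and then $M(z)=m(h(z))$ furnishes the linearization $\Gamma_t(z)=M(F_t(z))^{-1}e^{tB_0}M(z)$, completing the proof.
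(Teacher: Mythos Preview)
Your proposal is correct and follows essentially the same approach as the paper's own proof: reduce via the K{\oe}nigs function to \eqref{rr2}, use the equivalence lemma to pass to the singular ODE \eqref{rr4}, solve the recursion \eqref{rec1} using the spectral hypothesis \eqref{nr}, establish convergence of \eqref{MS} via the uniform bound on $(k\lambda\cdot 1_\A-\ad_{B_0})^{-1}$ and the induction giving \eqref{bm}, and then invoke Lemma~\ref{local} to globalize. The only detail worth making explicit (the paper is equally silent on it) is that the extended $m$ takes values in $\A_*$ on all of $\Omega$, which follows immediately from the extension formula in Lemma~\ref{local} together with Corollary~\ref{cor-inver}.
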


Note that the equality $B_0=B(0)=0$ implies that $\ad_{B_0}=0$, so
that \eqref{nr} trivially holds, hence
$\Gamma_t(z)=M(F_t(z))^{-1}M(z)$. Thus we have the following.
\begin{corol}
A semicocycle $\{\Gamma_t\}_{t\ge0}$ is a coboundary if and only
if $B_0=\frac{d}{dt}\Gamma_t(z_0)\Big|_{t=0}=0$.
\end{corol}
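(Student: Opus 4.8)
The plan is to prove the two implications separately; the nontrivial direction reduces immediately to Theorem~\ref{th1}. For the necessity, suppose $\Gamma_t$ is a coboundary, so $\Gamma_t(z) = M(F_t(z))^{-1}M(z)$ for some $M \in \Hol(\D, \A_*)$. Since $z_0$ is a fixed point of $\mathcal{F}$, we have $F_t(z_0) = z_0$ for all $t \geq 0$, so evaluating the representation at $z_0$ gives $\Gamma_t(z_0) = M(z_0)^{-1}M(z_0) = 1_\A$ for every $t$. Differentiating at $t = 0$ then yields $B_0 = B(z_0) = 0$, which is the asserted necessity (already recorded after \eqref{B0}); note that no normalization of $M$ is needed, since the fixed-point property does the work directly.

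For the sufficiency, suppose $B_0 = B(z_0) = 0$. Then $\ad_{B_0} \equiv 0$, so $\sigma(\ad_{B_0}) = \{0\}$. Because $\Re\lambda > 0$ forces $\lambda \neq 0$, the point $0$ does not belong to $\lambda\N = \{\lambda, 2\lambda, \dots\}$, and hence $\sigma(\ad_{B_0}) \cap (\lambda\N) = \emptyset$; that is, condition~\eqref{nr} holds. Theorem~\ref{th1} then guarantees that $\Gamma_t$ is linearizable, $\Gamma_t(z) = M(F_t(z))^{-1}e^{tB_0}M(z)$ for some $M \in \Hol(\D, \A_*)$. Substituting $B_0 = 0$, so that $e^{tB_0} = 1_\A$, collapses this to $\Gamma_t(z) = M(F_t(z))^{-1}M(z)$, exhibiting $\Gamma_t$ as a coboundary.

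I do not expect a genuine obstacle here: once Theorem~\ref{th1} is in hand the statement is essentially a substitution. The single point deserving care is the verification that $0 \notin \lambda\N$, which rests on the convention $\N = \{1, 2, \dots\}$ together with $\Re\lambda > 0$. This is exactly what renders the $k = 0$ instance of \eqref{rec1} (trivially satisfied by $m_0 = 1_\A$) irrelevant to the spectral gap condition, so that the vanishing of $\ad_{B_0}$ — which does put $0$ into $\sigma(\ad_{B_0})$ — does not spoil \eqref{nr}.
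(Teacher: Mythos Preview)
Your proposal is correct and follows essentially the same route as the paper: the necessity is the observation recorded after \eqref{B0} (your direct evaluation at $z_0$ is a clean variant of the same idea), and the sufficiency is precisely the paper's argument that $B_0=0$ forces $\ad_{B_0}=0$, so condition \eqref{nr} is vacuously satisfied and Theorem~\ref{th1} delivers the coboundary representation once $e^{tB_0}=1_{\A}$.
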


\vspace{2mm}

We conclude with some remarks concerning Theorem~\ref{th1}:

(1) It is noteworthy that the sufficient condition given in the
theorem is only in terms of the value $B_0$ of $B$ at the interior
fixed point of the semigroup.

(2) We also recall that the condition \eqref{nr} follows from the
more `explicit' condition (\ref{nr1}) on the spectrum of $B_0$,
and that in the case that $\A$ is finite-dimensional, it is
equivalent to this condition.

(3) In the case that $\A$ is finite-dimensional, the condition
\eqref{nr1} holds `generically', in the sense that, given $B_0$,
the set of $\lambda$'s for which it does not hold is a countable set.

(4) In case $\A$ is commutative, we have $\ad_{B_0}=0$, so that
(\ref{nr}) trivially holds, and linearizability is assured. We
thus recapture the conclusion of Theorem \ref{th-present-Gamma}
above, but the value of that theorem is that, for commutative
algebras, it gives us a simple explicit representation for $M$,
which is not available in the non-commutative case.

(5) The condition \eqref{nr} is a sharp one, in the following
sense: assuming that $B_0\in \A$ is such that \eqref{nr1} does not
hold, we can always find function $B:\D\to\A$ such that
$B(0)=B_0$, and the semicocycle generated by $B$ is {\it not}
linearizable. Indeed, let $k$ be an integer such that $k\lambda\in
\sigma(\ad_{B_0})$. Then by Lemma \ref{ei} we have $k\lambda\in
\sigma(\ad_{B_0})$, hence (using the finite-dimensionality of
$\A$) there exists $A\in \A$ which is {\it{not}} in the range of
$k\lambda\cdot 1_\A-\ad_{B_0}$. Take $b(w)=B_0+Aw^k$ (that is,
$B(z)=B_0+h(z)^kA$). We thus have $b_j=0$ for $j\neq 0,k$. Then
equation \eqref{rec1} (for the chosen value of $k$) becomes
$$(k\lambda\cdot 1_\A -\ad_{B_0})(m_k)=A,$$
which is unsolvable, since $A$ is not in the range of $k\lambda\cdot
1_\A-\ad_{B_0}$. Therefore the sequence of equations \eqref{rec2}
is not solvable, so that the semicocycle generated by $B$ is not
linearizable.

(6) Using the above observation, we can easily construct simple
examples of nonlinearizable semicocycles. For instance, take the
semigroup $F_t(z)=e^{-t}z$, so that  $\lambda=-f'(0)=1$ and the
K{\oe}nigs function is the identity $h(z)=z$ with $\Omega=\D$.
Take $\A=M_2(\C)$. Let the semicocycle generator be
$$B(z)=\left(\begin{array}{cc}
\alpha_1 + \beta_{11}z& \beta_{12}z \\
\beta_{21}z & \alpha_2+\beta_{22}z
\end{array}  \right).$$
Then
$$b(z)=B(z)=b_0+ b_1 z, $$
where
$$b_0=B_0=\left(\begin{array}{cc}
\alpha_1 & 0\\
0 & \alpha_2
\end{array}  \right),\quad b_1=\left(\begin{array}{cc}
\beta_{11} & \beta_{12} \\
\beta_{21}  & \beta_{22}
\end{array}  \right).$$
By Theorem \ref{th1}, if $|\alpha_1-\alpha_2|$ is not a natural
number, then the semicocycle generated by $B$ is linearizable. Therefore, to obtain non-linearizable semicocycles, we can take, {\it{e.g.}},
$\alpha_2=\alpha_1+1$. Write
  $$m_1=\left( \begin{array}{cc}
  \eta_{11}  &  \eta_{12}\\
  \eta_{21} & \eta_{22}
  \end{array} \right).$$
Equation \eqref{nr1}, for $k=1$, is then
\begin{eqnarray*}
\left( \begin{array}{cc}
\eta_{11}  &  \eta_{12}\\
\eta_{21} & \eta_{22}
\end{array} \right)
-\left( \begin{array}{cc}
\eta_{11}  &  \eta_{12}\\
\eta_{21} & \eta_{22}
\end{array} \right) \left(\begin{array}{cc}
\alpha_1 & 0\\
0 & \alpha_2
\end{array}  \right)
+ \left(\begin{array}{cc}
\alpha_1 & 0\\
0 & \alpha_2
\end{array}  \right)\left( \begin{array}{cc}
\eta_{11}  &  \eta_{12}\\
\eta_{21} & \eta_{22}
\end{array} \right) \\
=\left(\begin{array}{cc}
\beta_{11} & \beta_{12} \\
\beta_{21}  & \beta_{22}
\end{array}  \right) \end{eqnarray*}
which, after simplifying and using the assumption
$\alpha_2=\alpha_1+1$, becomes
$$\left( \begin{array}{cc}
\eta_{11}  &  0\\
2\eta_{21} & \eta_{22}
\end{array} \right)=-\left(\begin{array}{cc}
\beta_{11} & \beta_{12} \\
\beta_{21}  & \beta_{22}
\end{array}  \right). $$
We thus see that the equation is not solvable whenever
$\beta_{12}\neq 0$. This explains Example~\ref{ex-not-rep} above.

\bigskip

{\bf Acknowledgments.} The authors are grateful to Lior Rosenzweig
for his fruitful remarks.  This work was partially supported by
the European Commission under the project STREVCOMS
PIRSES-2013-612669.




\begin{thebibliography}{950}

\bibitem{AG1964} G. Almkvist,
Stability of linear differential eqestions in Banach algebras,
{\it Math. Scand.} {\bf 14} (1964), 39--44.





\bibitem{B-G} C. A. Berenstein and R. Gay,
{\sl Complex Variables. An Introduction}, Springer Verlag, New
York, 1991.


\bibitem{B-P} E. Berkson and H. Porta,
Semigroups of analytic functions and composition operators, {\it
Michigan Math. J.} {\bf 25} (1978), 101--115.

\bibitem{Latus} C. Chicone and Y. Latushkin,
{\sl Evolution semigroups in dynamical systems and differential equations},
Mathematical Surveys and Monographs, {\bf 70}, Amer. Math. Soc., 1999.


\bibitem{Kr} J. L. Daleckij and M. G. Krein,
{\sl Stability of Solutions of Differential Equations in Banach
Spaces}, Amer. Math. Soc., 1974.


%


%
%


\bibitem{E-S-book} M. Elin and D. Shoikhet,
{\sl Linearization Models for Complex Dynamical Systems. Topics in
univalent functions, functional equations and semigroup theory},
Birkh{\"a}user, Basel, 2010.



%
%


%
%


%
%







%

%
%

%
\bibitem{J-12} F. Jafari, Z. Slodkowski and T. Tonev,
Semigroups of operators on Hardy spaces and cocycles of
holomorphic flows, {\it Complex Anal. Oper. Theory} {\bf 6}
(2012), 113--119.
%

\bibitem{J-05} F. Jafari, T. Tonev and E. Toneva,
Automatic differentiability and characterization of cocycles of
holomorphic flows, {\it Proc. Amer. Math. Soc.} {\bf 133}  (2005),
3389--3394.



\bibitem{J-97} F. Jafari, T. Tonev, E. Toneva and K. Yale,
Holomorphic flows, cocycles, and coboundaries, {\it Michigan Math.
J.} {\bf44}  (1997), 239--253.


\bibitem{K-H} A. Katok and B. Hasselblatt,
{\sl Introduction to the modern theory of dynamical systems},
Encyclopedia of Mathematics and its Applications, {\bf 54},
Cambridge University Press, Cambridge, 1995.



\bibitem{Konig} W. Konig,
Semicocycles and weighted composition semigroups on $H^p$, {\it
Michigan Math. J.} {\bf 37} (1990), 469–-476.




\bibitem{Pa} A.~Pazy,
{\sl Semigroups of linear operators and applications to partial
differential equations}, Appl. Math. Sc., {\bf 44},
Springer-Verlag, New York, 1983.

%
%


%
%


\bibitem{R-73} W. Rudin,
{\sl Functional Analysis} (first edition), McGraw-Hill, 1973.




\bibitem{SD} D. Shoikhet,
{\sl Semigroups in Geometrical Function Theory}, Kluwer,
Dordrecht, 2001.





%
%


\bibitem{YK} K. Yosida,
\textsl{Functional Analysis}, Springer, Berlin, 1974.



\end{thebibliography}
\end{document}